\newcommand{\lraup}{\relbar\joinrel\relbar\joinrel\rightharpoonup}
\newenvironment{proof}[1][]{\noindent{\bfseries Proof~#1\ }}{\hfill$\square$\medskip}
\let\bs\boldsymbol
\let\eps\varepsilon
\let\R\RR
\def\N{\mathbb N}
\def\C{\mathscr{C}}
\newcommand{\K}{\mathbb K}
\DeclareMathOperator{\sgn}{sgn}
\newtheorem{theorem}{Theorem}[section]
\newtheorem{lemma}[theorem]{Lemma}
\newtheorem{remark}[theorem]{Remark}
\begin{document}

\title{Solutions for linear conservation laws with gradient constraints}

\author{José Francisco Rodrigues \and Lisa Santos}

\date{}

\maketitle

\begin{abstract}
We consider variational inequality solutions with prescribed gradient constraints for first order linear boundary value problems. For operators with coefficients only in $L^2$, we show the existence and uniqueness of the solution by using a combination of parabolic regularization with a penalization in the nonlinear diffusion coefficient. We also prove  the continuous dependence of the solution with respect to the data, as well as,  in a coercive case, the asymptotic stabilization as time $t\rightarrow+\infty$ towards the stationary solution. In a particular situation, motivated by the transported sandpile problem, we give sufficient conditions for the equivalence of the first order problem with gradient constraint with a two obstacles problem, the obstacles being the signed distances to the boundary. This equivalence, in special conditions, illustrates also the possible stabilization of the solution in finite time.
\end{abstract}

\vspace{10mm}

\hfill{\large\em Dedicado a João Paulo Dias, no seu ativo septuagésimo aniversário! }

\vspace{5mm}

\section{Introduction}

Several works have developed solutions $u=u(x,t)$ to the linear equation of first order
\begin{equation} \label{1}
\partial_tu+\bs b\cdot\nabla u+cu=f,
\end{equation}
for $t>0$ and $x$ in an open subset $\Omega$ of $\R^N$, where $\bs b=\bs b(x,t)$ is a given vector field and $c=c(x,t)$ and $f=f(x,t)$ are given functions.

The well-known DiPerna and Lions theory of renormalized solutions, when $\bs b$ is given in Sobolev spaces, has been extended by Ambrosio to BV coefficients for the Cauchy problem and has found several applications in the study of hyperbolic systems of multidimensional conservation laws (see, for instance \cite{AcLOW2008}, for an introduction and references). The  initial-boundary value problem for \eqref{1} with a $C^1$ vector field $\bs b$ has been studied in the pioneer work of Bardos \cite{Bardos1970} using essentially a $L^2$ approach for the transport operator. This method also  holds for Lipschitz vector fields, as observed in \cite{Gaymonat-Leyland1987}, and was extended by Boyer \cite{Boyer2005} for solenoidal vector fields in Sobolev spaces that do not need to be tangential to the boundary of $\Omega$, i.e. $\bs b\cdot\bs n\neq 0$ on $\partial\Omega$ for $t>0$.

The delicate point is then to prescribe the boundary data to the normal trace of $\bs b$ on the portion of the space-time boundary $\Gamma_-\subset\partial\Omega\times(0,T)$ where the characteristics are entering the domain $Q_T=\Omega\times(0,T)$. In the case when $\Gamma_-$ does not vary with $t$, Besson and Pousin \cite{BessonPousin2007} have treated the initial-inflow problems for the continuity equation \eqref{1} with $\bs L^\infty$ velocity fields $\bs b$ with $c=\nabla\cdot\bs b=\text{div}\,\bs b$ also in $L^\infty(Q_T)$. Recently Crippa et al. \cite{CDS2014} have also considered this problem without that restriction on $\Gamma_-$ and with similar assumptions on $\bs b$ in BV.

Here we are interested in the initial-boundary value problem for \eqref{1} under the additional gradient constraint
\begin{equation}\label{2}
|\nabla u(x,t)|\le g(x,t),\quad (x,t)\in Q_T,
\end{equation}
where $g=g(x,t)$ is a given strictly positive and bounded function. This problem was already  considered in \cite{RodriguesSantos2012} in the framework of a quasilinear continuity equation
\begin{equation}\label{3}
\partial_tu+\nabla\cdot\bs\Phi(u)=F(u)
\end{equation}
and a Lipschitz semilinear lower order term $F=F(x,t,u)$, with a gradient bound in \eqref{2} that may depend also on the solution but not on time. As observed in \cite{RodriguesSantos2012}, in the linear transport equation \eqref{1}, corresponding to
$$\bs\Phi(u)=\bs b u\quad\text{and}\quad F(u)=f+\big(\nabla\cdot\bs b-c\big)u$$
with regular coefficients and $g=g(x)$ independent of $t$, the problem is well-posed in terms of a first order variational inequality with the convex set
\begin{equation}\label{4}
\K_g=\big\{v\in H^1_0(\Omega):|\nabla v(x)|\le g(x)\text{ a.e. }x\in\Omega\big\}.
\end{equation}
In \cite{RodriguesSantos2012} it is also proved the existence and asymptotic behaviour of quasivariational solutions for positive nonlinear gradient constraints $g=g(x,u)$ depending continuously on the solution $u=u(x,t)$. Here $H^1_0(\Omega)$ denotes the usual Sobolev space of functions vanishing on the boundary $\partial\Omega$, as the gradient bound allows to prescribe values on the whole boundary. Moreover, it allows also to consider the data $\bs b$, $c$ and $f$ only in $L^2(Q_T)$, provided $c-\frac12\nabla\cdot\bs b$ is bounded from below.

A motivation for the constraint \eqref{2} applied to the equation \eqref{1} is the ``transported sandpile'' problem. Following Prigozhin \cite{Prigozhin1994,Prigozhin1996}, the gradient of the shape of a growing pile of grains $z=z(x,t)$ characterized by its angle of repose $\alpha>0$ is constrained by its surface slope, i.e. $g=\arctan\alpha$. A general conservation of mass, in the form \eqref{3} with $\bs\Phi=-\mu\nabla u+\bs bu$ and source density $F$, with transport directed by $\bs b$ and dropping flow directed to the steepest descent $-\mu\nabla u$, should be then subjected to the unilateral conditions
$$\mu\ge0,\quad|\nabla u|\le g\quad\text{and}\quad|\nabla u|<g\Rightarrow\mu=0.$$

We illustrate this problem with the interesting example of the one dimensional special case announced in \cite{Rodrigues2012}: $\Omega=(0,1)$, $\bs b=1=g$, i.e. $\alpha=\frac\pi{4}$ and $f(x,t)=t$. Taking as initial condition the parabola $z_0(x)=-\frac12 x^2$, up to the point $\xi_0=\sqrt 3-1$, and the straight line $z_0(x)=x-1$, for $\xi_0\le x\le 1$, the profile of the ``transported sandpile''  growth attains a steady state exactly at $t=\frac54$. This happens with the first free boundary point $\xi(t)$ increasing from $\xi_0$ up to $t=\frac12$, touching then the boundary $x=1$, and decreasing till the midpoint $x=\frac12$. At this point, the free boundary $\xi(t)$ meets a second increasing free boundary $\zeta(t)=2(t-1)$, that appears at $t=1$ and increases up to the final stabilization at $t=\frac54.$

\begin{figure}[h]
$$\begin{array}{ccc}
\begin{minipage}{4cm} \includegraphics[height=3.75cm]{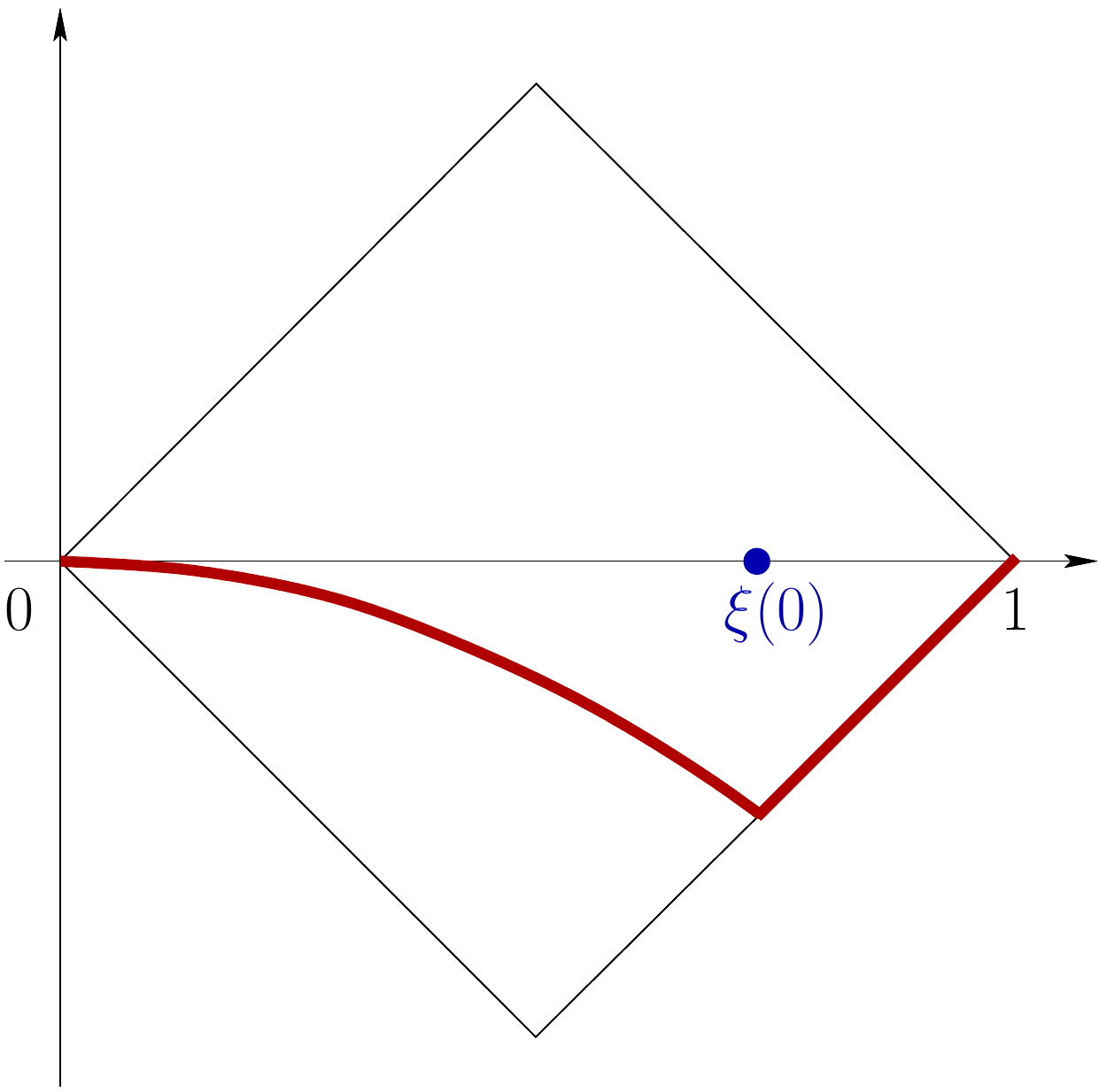}
\end{minipage}
\begin{minipage}{4cm} \includegraphics[height=3.75cm]{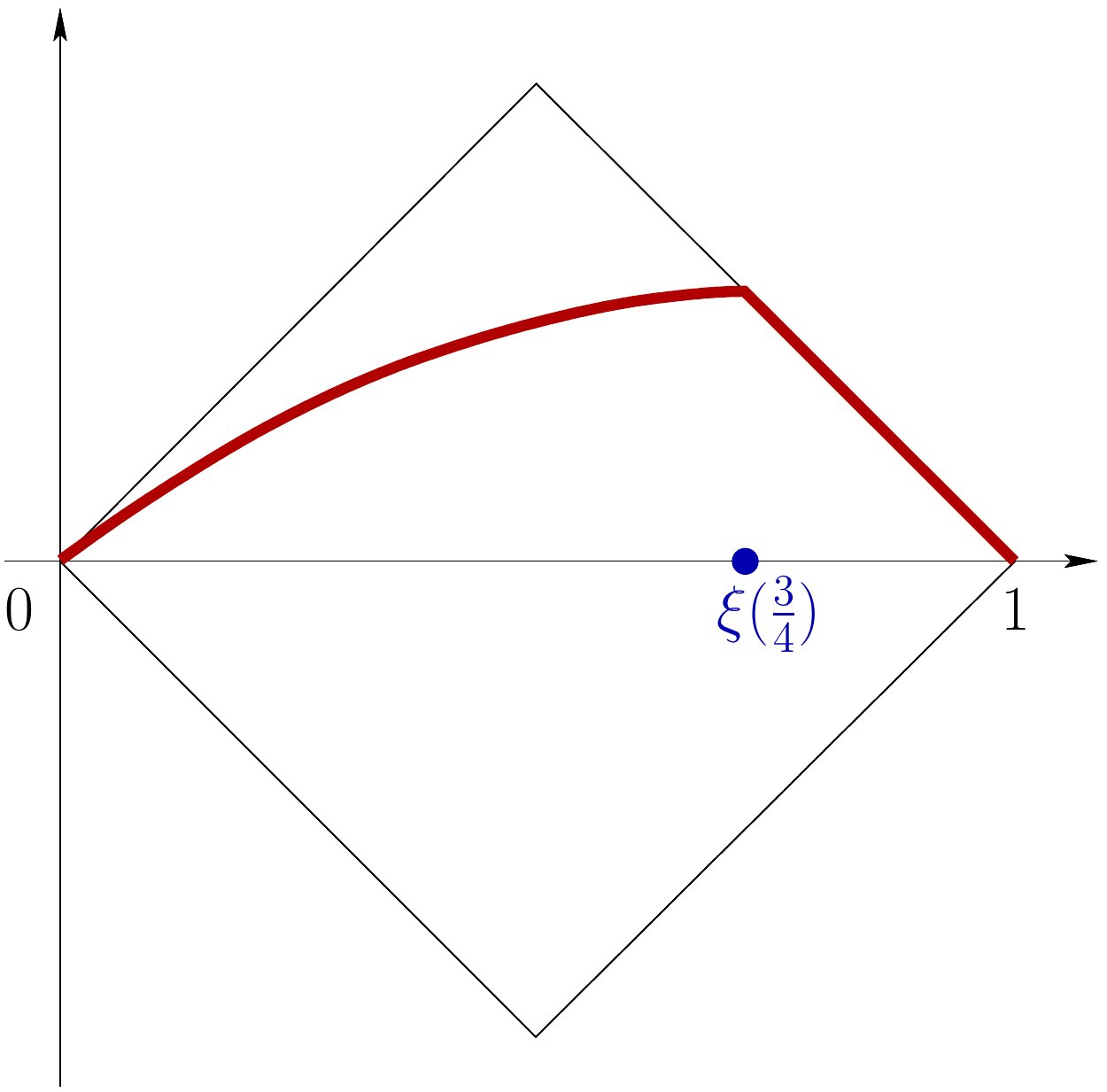}
\end{minipage}
\begin{minipage}{4cm} \includegraphics[height=3.75cm]{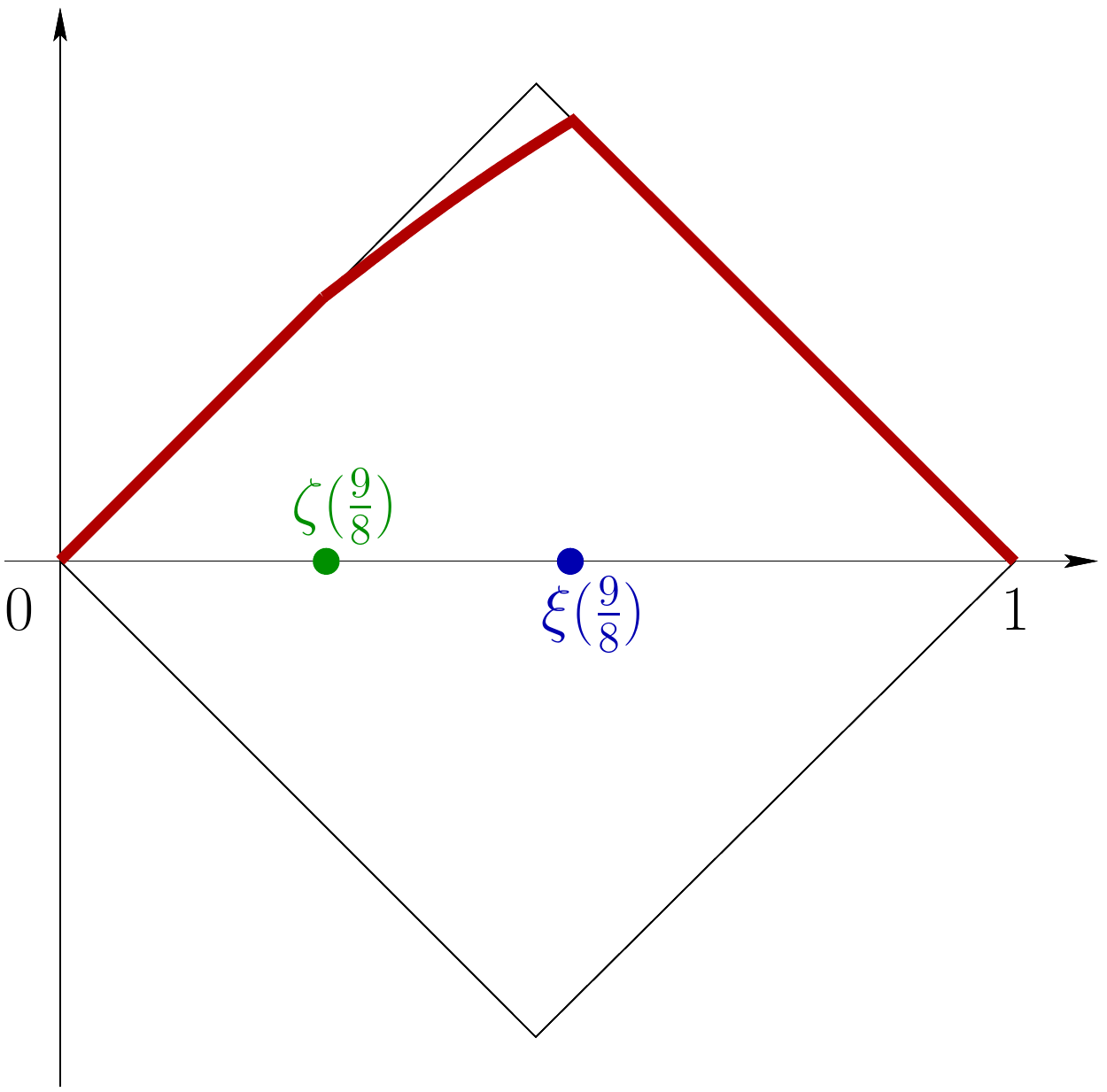}
\end{minipage}
\begin{minipage}{4cm} \includegraphics[height=3.75cm]{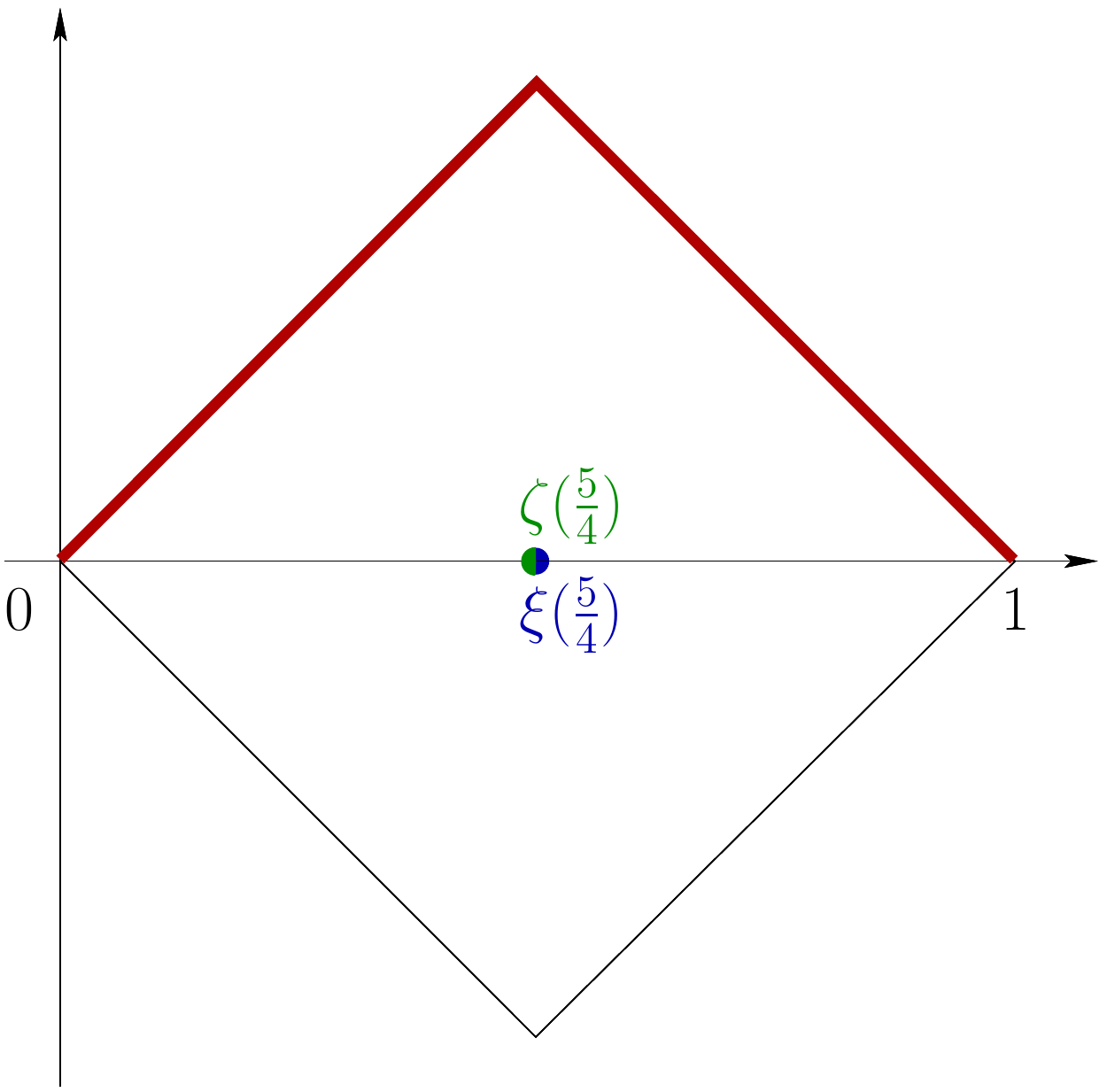}
\end{minipage}
\end{array}$$
\caption{The free boundary of the transported sandpile problem at $t=0, 3/4, 9/8$ and $5/4$.}
\end{figure}

The explicit sandpile profile is given by
$$z(x,t)=\left\{\begin{array}{ll}
tx- \frac12 x^2&\mbox{ if }0\le x\leq\xi(t)\mbox{ and }0\le t\le1,\vspace{3mm}\\
x-1&\mbox{ if }\xi(t)<x\le 1\mbox{ and }0\le t\le\frac12,\vspace{3mm}\\
1-x&\mbox{ if }\xi(t)<x\le 1\mbox{ and }\frac12< t\le1,\vspace{3mm}\\
x&\mbox{ if }0\le x\le \zeta(t)\mbox{ and }1< t\le\frac54,\vspace{3mm}\\
tx-\frac12 x^2&\mbox{ if }\zeta(t)< x\le \xi(t)\mbox{ and }1< t\le\frac54,\vspace{3mm}\\
x-1&\mbox{ if }\xi(t)< x\le 1\mbox{ and }1< t\le\frac54,\vspace{3mm}\\
\frac12-|x-\frac12|&\mbox{ if }t>\frac54,
\end{array}
\right.$$
where $\xi(t)=t-1+\sqrt{(1-t)^2+2}$,
if $0\le t\le\frac12$, and $\xi(t)=t+1-\sqrt{(t+1)^2-2}$, if $\frac12<t\le\frac54$.

It is clear that $z(t)\in\K_1\subset H^1_0(\Omega)$.

We introduce the function $d(x)=\frac12-|x-\frac12|$ and the convex set
$$\K_\vee^\wedge=\big\{v\in H^1_0(\Omega):-d(x)\le v(x)\le d(x) \text{ a.e. }x\in(0,1)\big\}\supset \K_1.$$

Since $\partial_tz+\partial_xz=t$ in $A=\big\{(x,t)\in Q_T:|\partial_xz(x,t)|<1\big\}$, by simple computation and integration in $Q_T$, we easily conclude that $z$, which (using $\vee=sup$ and $\wedge=inf$) can be written as
$$z(x,t)=\big(-d(x)\big)\vee\big((tx-\frac12 x^2)\wedge d(x)\big),$$
is then the unique solution in $\K_\vee^\wedge$ of the variational inequality
\begin{equation}\label{5}
\int_{Q_T}\big(\partial_tu+\partial_xu-t\big)(w-u)\ge 0\quad\forall\, w(t)\in\K_\vee^\wedge,\, 0<t<T,\qquad u(0)=z_0.
\end{equation}
But since $z_0,z(t)\in\K_1$, $z$ is also the solution of the variational inequality \eqref{5} with $w(t)\in\K_1\subset\K_\vee^\wedge$, which has at most one solution also in the convex set $\K_1$, defined as in \eqref{4} with $g\equiv1$.

In Section 2 we establish the existence and the uniqueness of the solution of the first order variational inequality associated with the general linear equation  \eqref{1} in a family of time dependent convex sets with gradient constraints of the type \eqref{4} with $g=g(x,t)$. We improve the results of  \cite{RodriguesSantos2012} under general square integrability assumptions on the coefficients and on the data, by direct estimates in the parabolic-penalized problem and passage to the limit, first in the penalization parameter $\eps$, and afterwards in the regularization parameter $\delta$. The continuous dependence of the solution with respect to the gradient constraint variations in $L^\infty$, to the coefficients of the operator and the data in  $L^1$, is proven in Section 3 under the weak coercive condition \eqref{coerc}, as well as the asymptotic convergence towards the unique stationary solution under the stronger coercive assumption \eqref{assumption_infty3}.

Finaly, in Section 4, we consider the special case of a constant vector $\bs b$, with $g=1$ and $f=f(t)$ bounded, to show the equivalence  of the variational inequalities with the gradient constraint and with the two obstacles, i.e. with the signed distances to the boundary constraints on the solution. This is a first result of this type for first order variational inequalities, similar to the elliptic well-known case of the elastoplastic torsion problem (see, for instance,  \cite{rod87} and its references) and to the parabolic case without convection considered in  \cite{Lisa1991, Lisa2002}, where it was shown that this equivalence is not always possible in the general case. With additional conditions, that include the above one dimensional transported sand pile problem, we establish the finite time stabilization of the solution. This extends to the convective problem a similar result by Cannarsa et al. \cite{Cannarsa} and raises the interesting open question of establishing more general conditions on the finite time stabilization of evolutionary problems with gradient constraints.

\section{Existence and uniqueness of the variational solution}

Let $\Omega $ be a bounded open subset of $\R^N$ with a Lipschitz boundary $\partial\Omega $ and, for any  $T>0$, denote
 $Q_T=\Omega \times(0,T)$.

Assume that
\begin{equation}\label{bc}
\bs b\in\bs L^2(Q_T)\quad\text{and}\quad c\in L^2(Q_T),
\end{equation}
and there exists $l\in\R$ such that
\begin{equation}\label{coerc}
c-\frac12\nabla\cdot\bs b\ge l\quad\text{in}\quad Q_T,
\end{equation}
being this inequality satisfied in the distributional sense, since $\nabla\cdot\bs b$ does not need to be a function.

In addition we also suppose given

\begin{equation}\label{dados}
f\in L^2(Q_T)\quad\text{and}\quad u_0\in\K_{g(0)},
\end{equation}
with
\begin{equation}\label{g}
g\in W^{1,\infty}\big(0,T;L^\infty(\Omega)\big),\quad g\ge m>0.
\end{equation}

As in  \eqref{4}, we define, for $t\ge0,$
\begin{equation*}
\K_{g(t)}=\big\{v\in H^1_0(\Omega ):|\nabla  v(x)|\leq g(x,t)\mbox{ for a.e. in }x\in\Omega \big\}.
\end{equation*}

Consider the following variational inequality problem: To find $u$, in an appropriate space, such that
\begin{equation}
\label{iv}
\begin{array}{l}
\left\{\begin{array}{l}
u(t)\in\K_{g(t)}\mbox{ for a.e. }t\in (0,T),\qquad  u(0)=u_0,\\
\\
\displaystyle\int_\Omega  \partial_t u(t)(v-u(t))+\displaystyle\int_\Omega\bs b(t)\cdot\nabla u(t)(v-u(t))+\displaystyle\int_\Omega c(t)\,u(t)(v-u(t))
\geq\displaystyle\int_\Omega  f(t)(v-u(t)),\\
\\
\hfill{\forall \, v\in\K_{g(t)},\mbox{ for a.e. }t\in (0,T)}.
\end{array}
\right.
\end{array}
\end{equation}

\begin{theorem}
	\label{main}
	With the assumptions \eqref{bc}-\eqref{g}, problem \eqref{iv} has a unique solution
	\begin{equation*}
	u\in L^\infty\big(0,T;W^{1,\infty}_0(\Omega)\big)\cap \C(\overline Q_T),\qquad \partial_tu\in L^2(Q_T).
	\end{equation*}
\end{theorem}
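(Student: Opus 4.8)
The plan is to approximate \eqref{iv} by a two-parameter family of quasilinear parabolic equations: a penalization parameter $\eps>0$ that forces the gradient constraint through the nonlinear diffusion coefficient, and a regularization parameter $\delta>0$ that adds a uniformly elliptic term and replaces $\bs b,c,f$ by smooth bounded truncated mollifications $\bs b_\delta,c_\delta,f_\delta$ for which $c_\delta-\tfrac12\nabla\cdot\bs b_\delta\ge l-o(1)$ still holds (mollification preserves \eqref{coerc}, being sign-preserving on nonnegative distributions and commuting with $\nabla\cdot$). Concretely I would solve
$$\partial_tu_{\eps\delta}-\nabla\cdot\Big(\big(\delta+k_\eps(|\nabla u_{\eps\delta}|^2-g^2)\big)\nabla u_{\eps\delta}\Big)+\bs b_\delta\cdot\nabla u_{\eps\delta}+c_\delta u_{\eps\delta}=f_\delta\quad\text{in }Q_T,$$
with $u_{\eps\delta}=0$ on $\partial\Omega\times(0,T)$ and $u_{\eps\delta}(0)=u_0$, where $k_\eps:\R\to[0,\infty)$ is smooth, nondecreasing, vanishes on $(-\infty,0]$ and grows like $\tfrac1\eps(s-\eps)_+$ for $s\to+\infty$. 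Since $\bs p\mapsto\big(\delta+k_\eps(|\bs p|^2-g^2)\big)\bs p$ is monotone and coercive, for each fixed $\eps,\delta$ a solution $u_{\eps\delta}\in L^2(0,T;H^1_0(\Omega))\cap\C([0,T];L^2(\Omega))$ exists by the classical theory of monotone parabolic operators, and $\partial_tu_{\eps\delta}\in L^2(Q_T)$ by testing with $\partial_tu_{\eps\delta}$ (legitimate because the coefficients are now smooth).

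\emph{Estimates and the limit $\eps\to0$ with $\delta$ fixed.} Testing with $u_{\eps\delta}$, integrating the convection term by parts and using $\int_\Omega(c_\delta-\tfrac12\nabla\cdot\bs b_\delta)u_{\eps\delta}^2\ge(l-o(1))\|u_{\eps\delta}\|_{L^2}^2$, a Gronwall argument bounds $u_{\eps\delta}$ in $L^\infty(0,T;L^2(\Omega))$ and $\delta^{1/2}\nabla u_{\eps\delta}$ in $L^2(Q_T)$, and yields $\int_{Q_T}k_\eps(|\nabla u_{\eps\delta}|^2-g^2)|\nabla u_{\eps\delta}|^2\le C$; since $|\nabla u_{\eps\delta}|^2>g^2\ge m^2$ on the support of $k_\eps(|\nabla u_{\eps\delta}|^2-g^2)$, this also bounds $\int_{Q_T}k_\eps(|\nabla u_{\eps\delta}|^2-g^2)$ by $C/m^2$. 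Testing with $\partial_tu_{\eps\delta}$, rewriting $\int_{Q_T}k_\eps(|\nabla u_{\eps\delta}|^2-g^2)\nabla u_{\eps\delta}\cdot\nabla\partial_tu_{\eps\delta}$ through $\partial_tK_\eps(|\nabla u_{\eps\delta}|^2-g^2)$ (with $K_\eps'=k_\eps$) and $\partial_tg$, and using that $K_\eps(|\nabla u_0|^2-g(0)^2)=0$ since $u_0\in\K_{g(0)}$, one gets $\int_{Q_T}|\partial_tu_{\eps\delta}|^2\le C_\delta$ uniformly in $\eps$ and, combined with the previous bound, $\int_{Q_T}\big(|\nabla u_{\eps\delta}|^2-g^2\big)_+\to0$ as $\eps\to0$. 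Extracting weak limits $u_{\eps\delta}\rightharpoonup u_\delta$, this last fact forces $|\nabla u_\delta|\le g$, i.e. $u_\delta(t)\in\K_{g(t)}$ a.e.; then, testing the equation against $v-u_{\eps\delta}$ with $v\in\K_{g(t)}$, the monotonicity of $\bs p\mapsto k_\eps(|\bs p|^2-g^2)\bs p$ together with $k_\eps(|\nabla v|^2-g^2)=0$ gives $\int_{Q_T}k_\eps(\cdots)\nabla u_{\eps\delta}\cdot\nabla(u_{\eps\delta}-v)\ge0$, and choosing $v=u_\delta$ one derives $\delta\int_{Q_T}|\nabla(u_{\eps\delta}-u_\delta)|^2\to0$. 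With this strong convergence of gradients one passes to the limit in all terms and concludes that $u_\delta$ solves, for a.e. $t$, the regularized variational inequality with data $\bs b_\delta,c_\delta,f_\delta$ and the additional term $\delta\int_\Omega\nabla u_\delta\cdot\nabla(v-u_\delta)$, with $u_\delta(0)=u_0$.

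\emph{Uniform estimates and the limit $\delta\to0$.} Now the constraint holds exactly, so $u_\delta(t)\in\K_{g(t)}$ gives $\|\nabla u_\delta\|_{L^\infty(Q_T)}\le\|g\|_{L^\infty(Q_T)}$ and $u_\delta$ is bounded in $L^\infty(0,T;W^{1,\infty}_0(\Omega))$ independently of $\delta$. To bound $\partial_tu_\delta$ in $L^2(Q_T)$ uniformly in $\delta$ I would use the time-translation (Mignot--Puel type) argument on the regularized inequality: since $g\ge m>0$ and $g\in W^{1,\infty}(0,T;L^\infty(\Omega))$, for small $h>0$ the scaled translates $\lambda_h u_\delta(t\pm h)$ with $\lambda_h=1-\|\partial_tg\|_{L^\infty}h/m$ satisfy $|\nabla(\lambda_h u_\delta(t\pm h))|\le\lambda_h g(t\pm h)\le g(t)$ and are therefore admissible at time $t$; comparing the inequalities at $t$ and at $t\pm h$ with these test functions, dividing by $h^2$ and letting $h\to0$ — the $O(h)$ errors from $\lambda_h$ being controlled by the uniform bounds on $\|u_\delta\|_{L^\infty(Q_T)}$ and $\|\nabla u_\delta\|_{L^\infty(Q_T)}$, and the $\delta$-term cancelling or having the favorable sign — yields $\|\partial_tu_\delta\|_{L^2(Q_T)}\le C$ with $C$ independent of $\delta$. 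With these two uniform bounds, Aubin--Lions (using $W^{1,\infty}_0(\Omega)\hookrightarrow\hookrightarrow\C(\overline\Omega)$) gives a subsequence with $u_\delta\to u$ in $\C(\overline Q_T)$, $u_\delta\rightharpoonup^* u$ in $L^\infty(0,T;W^{1,\infty}_0(\Omega))$, $\partial_tu_\delta\rightharpoonup\partial_tu$ in $L^2(Q_T)$; that $u\in\C(\overline Q_T)$ also follows by interpolating the uniform spatial Lipschitz bound with $u\in\C([0,T];L^2(\Omega))$. Using $\bs b_\delta\to\bs b$, $c_\delta\to c$, $f_\delta\to f$ in $L^2(Q_T)$ and the uniform gradient bound (which is exactly what makes $\int_{Q_T}\bs b_\delta\cdot\nabla u_\delta(v-u_\delta)\to\int_{Q_T}\bs b\cdot\nabla u(v-u)$), together with $\delta\int_\Omega\nabla u_\delta\cdot\nabla(v-u_\delta)\to0$, one passes to the limit in every term — weak closedness of $\K_{g(t)}$ handling the constraint and uniform convergence in $\C(\overline Q_T)$ handling the quadratic terms $\int_\Omega\partial_tu_\delta\,u_\delta$ and $\int_\Omega c_\delta u_\delta^2$ — and a localization in $t$ shows that $u$ solves \eqref{iv} with the stated regularity. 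Uniqueness follows by taking $v=u_2(t)$ in the inequality for $u_1$ and $v=u_1(t)$ in that for $u_2$, adding, and using $\int_\Omega(c-\tfrac12\nabla\cdot\bs b)(u_1-u_2)^2\ge l\|u_1-u_2\|_{L^2}^2$ (legitimate since $(u_1-u_2)^2\in W^{1,1}_0(\Omega)$) to get $\tfrac12\tfrac{d}{dt}\|u_1-u_2\|_{L^2}^2+l\|u_1-u_2\|_{L^2}^2\le0$, whence $u_1\equiv u_2$ by Gronwall, since $u_1(0)=u_2(0)$. I expect the main obstacle to be the $\delta$-uniform bound on $\partial_tu_\delta$: it is the only step where $g\ge m>0$ and the $W^{1,\infty}$-in-time regularity of $g$ enter essentially, to produce admissible perturbations of the solution inside the moving convex set $\K_{g(t)}$.
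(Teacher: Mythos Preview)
Your overall architecture (penalization in $\eps$, regularization in $\delta$, pass $\eps\to0$ then $\delta\to0$, uniqueness by coercivity and Gronwall) matches the paper's, and the uniqueness argument is the same. The choice of a linearly growing $k_\eps$ instead of the paper's exponential $k_\eps(s)=e^{s/\eps}$ is a minor variant; you lose the $L^p$-for-all-$p$ gradient bounds of the paper's Estimate~2 but you compensate by the monotonicity/strong-convergence trick, which is fine.

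The real gap is in the $\delta$-uniform bound on $\partial_t u_\delta$. Your Mignot--Puel time-translation argument, carried out on the regularized inequality \eqref{ivdelta}, compares the inequalities at $t$ and $t+h$; after adding and integrating, the convection and zero-order terms inevitably produce errors of the form
\[
\int_0^{T-h}\!\!\int_\Omega\big(\bs b_\delta(t)-\bs b_\delta(t+h)\big)\cdot\nabla u_\delta(t)\,\big(u_\delta(t+h)-u_\delta(t)\big),
\]
and analogous ones for $c_\delta$ and $f_\delta$. Dividing by $h^2$ and absorbing the difference quotient of $u_\delta$ via Young leaves a factor $h^{-2}\|\bs b_\delta(\cdot+h)-\bs b_\delta(\cdot)\|_{L^2(Q_T)}^2\to\|\partial_t\bs b_\delta\|_{L^2(Q_T)}^2$, which blows up as $\delta\to0$ since $\bs b\in\bs L^2(Q_T)$ carries no time regularity. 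The same happens with $c_\delta$ and $f_\delta$. There is also the unaddressed initial term $h^{-2}\|u_\delta(h)-u_0\|_{L^2}^2$. So the argument, as written, does not give a $\delta$-independent bound under the hypotheses \eqref{bc}--\eqref{dados}.

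The paper avoids this by never introducing time increments of the data: it tests the penalized \emph{equation} (not the inequality) with $\partial_t u^{\eps\delta}$, obtaining
\[
\|\partial_t u^{\eps\delta}\|_{L^2(Q_T)}^2\le C\big(\|\bs b^\delta\|_{\bs L^s(Q_T)}^2+\|c^\delta\|_{L^s(Q_T)}^2\big)\|\nabla u^{\eps\delta}\|_{\bs L^q(Q_T)}^2+C_4,\qquad \tfrac1s+\tfrac1q=\tfrac12,
\]
(Estimate~3), then carries this through $\eps\to0$: once $|\nabla u^\delta|\le g\in L^\infty$ is available, one may send $q\to\infty$ and correspondingly $s\to2$, reducing the coefficient norms to $\|\bs b^\delta\|_{\bs L^2}$ and $\|c^\delta\|_{L^2}$, which \emph{are} uniformly bounded. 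That exponent-interpolation trick is the missing idea; it is what allows $\bs b,c,f$ to be merely in $L^2(Q_T)$.
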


\begin{proof}
To prove the uniqueness of the solution we assume there exist two solutions $u_1$ and $u_2$. Using $u_2=u_2(t)$ as test function in \eqref{iv} for the variational inequality of $u_1$ and reciprocally, setting $\bar u=u_1-u_2$ at a.e. $t>0$, we obtain
$$\int_\Omega\partial_t\bar u(t)\bar u(t)+\int_\Omega\bs b(t)\cdot\nabla\bar u(t)\,\bar u(t)+\int_\Omega c(t)\bar u^2(t)\le0. $$
Using \eqref{coerc}, for any $v\in\C^\infty_0(\Omega)$, we have
$$\frac12\int_\Omega\bs b(t)\cdot\nabla v^2 + \int_\Omega c(t)v^2\ge l\int_\Omega v^2$$
and, by approximation in $H^1_0(\Omega)$ of $\bar u(t)$, we obtain,
$$\frac{d\ }{dt}\int_\Omega |\bar u(t)|^2+2l\int_\Omega|\bar u(t)|^2\le0.$$
By Gronwall's inequality, we conclude $\bar u\equiv0$ from $\bar u(0)=0$.

To prove the existence of a solution, we consider a family of approximating quasilinear parabolic problems for $u^{\eps\delta}$, with $\eps,\delta\in (0,1)$, defined as follows
\begin{equation}
\label{apr}
\left\{\begin{array}{l}
\partial_tu^{\eps\delta}-\delta\nabla \cdot( k_\eps(|\nabla u^{\eps\delta}|^2-g^2)\nabla u^{\eps\delta})+\bs b^\delta\cdot\nabla u^{\eps\delta}+c^\delta\, u^{\eps\delta}=f^\delta \mbox{ in }Q_T,\vspace{3mm}\\
u^{\eps\delta}=0\mbox{ on }\partial\Omega\times(0,T),\vspace{3mm}\\
u^{\eps\delta}(0)={u_0^\eps}\mbox{ in }\Omega _0,
\end{array}
\right.
\end{equation}
where $\bs b^\delta$, $c^\delta$, $f^\delta$ and $u_0^\eps$ are $\C^\infty$ appropriate regularizations of $\bs b$, $c$, $f$ and $u_0$, respectively,
with $|\nabla u_0^\eps|\leq g(0)$ and $k_\eps$ is a smooth real function such that $k_\eps(s)=1$ if $s\le 0$ and $k_\eps(s)=e^\frac{s}\eps$ if $s\ge\eps$. Notice that this problem has a unique solution $u^{\eps\delta}\in H^1\big(0,T;L^2 (\Omega)\big) \cap L^\infty\big(0,T;H^{1}_0(\Omega)\big)\cap \C(\overline{Q}_T)$,
 by the classical theory of parabolic quasilinear problems (see, for instance,  \cite{LSU67}).

We prove first several {\em a priori} estimates.

\vspace{3mm}

\noindent{\bf Estimate 1}
\begin{equation}\label{k}
\|k_\eps(|\nabla u^{\eps\delta}|^2-g^2)\|_{L^1
	(Q_T)}\leq \frac1\delta C_1,
\end{equation}
for some   constant $C_1$ dependent only on $m$, $|l|$, $\|f\|_{L^2(Q_T)}$, $\|g\|_{L^2(Q_T)}$ and $\|u_0\|_{L^2(\Omega)}$.

\vspace{3mm}

Multiplying the equation of the problem \eqref{apr} by $u^{\eps\delta}$ and integrating over
$Q_t=\Omega \times]0,t[$, we have
\begin{equation*}
\frac 12\int_\Omega |u^{\eps\delta}(t)|^2+\delta\int_{Q_t}k_\eps(|\nabla u^{\eps\delta}|^2-g^2)|\nabla u^{\eps\delta}|^2+\int_{Q_t}\big(\bs b^\delta\cdot\nabla u^{\eps\delta}\big)\,u^{\eps\delta}+
\int_{Q_t}c^\delta\,|u^{\eps\delta}|^2=\int_{Q_t}f^\delta u^{\eps\delta}+
\frac 12\int_\Omega| {u_0^\eps}|^2.
\end{equation*}

Observing that
\begin{equation*}
\int_{Q_t} \big(\bs b^\delta\cdot\nabla u^{\eps\delta}\big)\,u^{\eps\delta}=-\frac12\int_{Q_t}\big(\nabla\cdot\bs b^\delta\big)\,\big(u^{\eps\delta}\big)^2,
\end{equation*}
and using the coercive inequality for the regularized coefficients
$$c^\delta-\tfrac12\nabla\cdot\bs b^\delta=(c-\tfrac12\nabla\cdot\bs b)*\rho_\delta\ge l*\rho_\delta=l,$$
we have
\begin{equation*}
\frac 12\int_\Omega |u^{\eps\delta}(t)|^2+\delta\int_{Q_t}k_\eps(|\nabla u^{\eps\delta}|^2-g^2)|\nabla u^{\eps\delta}|^2\le \|f^\delta\|_{L^2(Q_T)}\|u^{\eps\delta}\|_{L^2(Q_t)}+\frac12\|u_0\|_{L^2(\Omega)}^2+|l|\|u^{\eps\delta}\|_{L^2(Q_t)}^2.
\end{equation*}

Hence, by the integral Gronwall's inequality, there exists a positive constant $C_T$, independent of $\eps$ and $\delta$, such that
$$\|u^{\eps\delta}\|_{L^2(Q_T)}\le C_T$$
and so
\begin{equation*}
\delta\int_{Q_t}k_\eps(|\nabla u^{\eps\delta}|^2-g^2)|\nabla u^{\eps\delta}|^2\le  C',
\end{equation*}
where $C'=C'(\|f\|_{L^2(Q_T)},\|u_0\|_{L^2(\Omega))}, |l|)$.

On the other hand, we observe that
\begin{equation}\label{11}
\int_{Q_T}k_\eps(|\nabla u^{\eps\delta}|^2-g^2)|\nabla u^{\eps\delta}|^2=\int_{Q_T}k_\eps(|\nabla u^{\eps\delta}|^2-g^2)\big(|\nabla u^{\eps\delta}|^2-g^2\big)+
\int_{Q_T}k_\eps(|\nabla u^{\eps\delta}|^2-g^2)g^2.
\end{equation}

Since $k_\eps(s)=1$ for $s\le 0$ and $k_\eps(s)s\ge 0$, for all $s\ge0$, then
\begin{multline}\label{22}
\int_{Q_t}k_\eps(|\nabla u^{\eps\delta}|^2-g^2)\big(|\nabla u^{\eps\delta}|^2-g^2\big)=
\int_{\{|\nabla u^{\eps\delta}|^2\le g^2\}} k_\eps(|\nabla u^{\eps\delta}|^2-g^2)\big(|\nabla u^{\eps\delta}|^2-g^2\big)\\
+\int_{\{|\nabla u^{\eps\delta}|^2>g^2\}}k_\eps(|\nabla u^{\eps\delta}|^2-g^2)\big(|\nabla u^{\eps\delta}|^2-g^2\big)\ge
-\int_{Q_T}g^2.
\end{multline}

From \eqref{11} and \eqref{22} we obtain
\begin{equation*}
\int_{Q_t}k_\eps(|\nabla w|^2-g^2)
\leq \frac1{m^2}
\left(\int_{Q_T}k_\eps(|\nabla w|^2-g^2)|\nabla w|^2+\int_{Q_T}g^2\right)
\le\frac1{ m^2}\Big(\frac1\delta C'+\|g\|^2_{L^2(Q_T)}\Big)\le \frac1\delta C_1,
\end{equation*}
where $C_1=C_1(m,\|f\|_{L^2(Q_T)},\|g\|_{L^2(Q_T)},\|u_0\|_{L^2(\Omega)},|l|)$.

\vspace{3mm}

\noindent{\bf Estimate 2}
\begin{equation}
\label{lp}
\|\nabla u^{\eps\delta}\|_{\bs L^p(Q_T)}\leq D_\delta,
\end{equation}
where, for any $\delta>0$ and any $1\le p<\infty$, the  constant $D_\delta$ depends only on $p$,  $m$, $|l|$, $\|f\|_{L^2(Q_T)}$, $\|g\|_{L^2(Q_T)}$, $\|u_0\|_{L^2(\Omega)}$ and a negative power of $\delta$.

\vspace{3mm}

From \eqref{k} we know that
\begin{equation*}
\int_{Q_T}k_\eps(|\nabla u^{\eps\delta}|^2-g^2)\leq\frac1\delta C_1,
\end{equation*}
where $C_1$ is the positive constant of Estimate 1. So,
\begin{equation*}
\frac1\delta C_1\geq\int_{\{|\nabla u^{\eps\delta}|^2>g^2+\eps\}}k_\eps(|\nabla u^{\eps\delta}|^2-
g^2)=
\int_{\{|\nabla u^{\eps\delta}|^2>g^2+\eps\}}e^{\frac{|\nabla u^{\eps\delta}|^2-g^2}{\eps}}
\end{equation*}
and recalling that, for all $ s>0$ and all $j\in\N$, $e^s\geq\frac{s^j}{j!}$,
we get, for any $j\in\N$,
\begin{equation*}
\int_{\{|\nabla u^{\eps\delta}|^2>g^2+\eps\}}
\left(|\nabla u^{\eps\delta}|^2-g^2\right)^j\leq
j!\eps^j\int_{\{|\nabla u^{\eps\delta}|^2>g^2+\eps\}} e^{\frac{|\nabla u^{\eps\delta}|^2-g^2}{\eps}}\leq \frac1\delta j!\eps^jC_1.
\end{equation*}

Given $1\le p<\infty$, we have
\begin{equation}
\label{aaa}
\int_{Q_T}|\nabla u^{\eps\delta}|^{2p}=\int_{\{|\nabla u^{\eps\delta}|^2\leq
	g^2+\eps\}}|\nabla u^{\eps\delta}|^{2p}+
\int_{\{|\nabla u^{\eps\delta}|^2> g^2+\eps\}}|\nabla u^{\eps\delta}|^{2p}
\end{equation}
and, since $g$ is bounded, we  can
estimate, for any $p\in\N$, the second integral in the second term of \eqref{aaa} as follows,
\begin{multline*}
\displaystyle\int_{\{|\nabla u^{\eps\delta}|^2> g^2+\eps\}}|\nabla u^{\eps\delta}|^{2p}\leq \displaystyle{\int_{\{|\nabla u^{\eps\delta}|^2>
		g^2+\eps\}}\sum_{j=0}^p}
\binom{p}{j}
\displaystyle{\|g\|_{L^\infty(Q_T)}^{2p-2j}\left(|\nabla u^{\eps\delta}|^2-g^2\right)^j}\\
\displaystyle{\leq\frac1\delta\sum_{j=0}^p}
\binom{p}{j}
\|g\|_{L^\infty(Q_T)}^{2p-2j}j!\eps^jC_1.
\end{multline*}

The first integral in the second term of \eqref{aaa} is clearly bounded since
\begin{equation*}
\int_{\{|\nabla u^{\eps\delta}|^2\leq g^2+\eps\}}|\nabla u^{\eps\delta}|^{2p}\leq\int_{Q_T}\left(g^2+1\right)^{p}
\end{equation*}
and the conclusion follows easily, first for $2p\in\N$ and afterwards  for any $1\le p<\infty$.

\vspace{3mm}

\noindent{\bf Estimate 3}
\begin{equation}
\label{te}
\|\partial_t u^{\eps\delta}\|_{L^2(Q_T)}^2\leq 4\big(\|\bs b^\delta\|^2_{\bs L^s(Q_T)}+C_3\|c^\delta\|^2_{\bs L^s(Q_T)}\big)\|\nabla u^{\eps\delta}\|^2_{\bs L^q(Q_T)}+ C_4,
\end{equation}
where, for $2<s<\frac{2N}{N-2}$, $C_3$ is an upper bound, independent of $q=\frac{2s}{s-2}$, of the Poincaré constant for $W^{1,q}_0(\Omega)$ and $C_4$ is a positive constant depending only on $m$,  $|l|$, $\|f\|_{L^2(Q_T)}$, $\|g\|^2_{W^{1,\infty}(0,T;L^\infty(\Omega ))}$ and $\|u_0\|_{L^2(\Omega)}$.

\vspace{3mm}

We multiply the equation of problem \eqref{apr} by $\partial_tu^{\eps\delta}$  and we integrate over $Q_t$, noting that $\partial_t u^{\eps\delta}=0$ on $\partial\Omega\times(0,T)$. Denoting $\phi_\eps(s)=\displaystyle{\int_0^sk_\eps(\tau)d\tau}$, we have
\begin{multline*}
\int_{Q_t}|\partial_tu^{\eps\delta}|^2+\frac \delta2\int_{Q_t}\frac{d\ }{dt}\left(\phi_\eps(|\nabla u^{\eps\delta}|^2-g^2)\right)
+\delta\int_{Q_t}k_\eps(|\nabla u^{\eps\delta}|^2-g^2)g \partial_t g\\
+\int_{Q_t}(\bs b^\delta\cdot\nabla u^{\eps\delta})\,\partial_tu^{\eps\delta}+
\int_{Q_t}c^\delta\,u^{\eps\delta}\,\partial_tu^{\eps\delta}=\int_{Q_t}f^\delta \partial_t u^{\eps\delta}.
\end{multline*}

We choose $2<s<\frac{2N}{N-2}$ and  $q=\frac{2s}{s-2}$, and so we have $\frac1{s}+\frac1{q}+\frac12=1$. Then
\begin{equation*}
\Big|\int_{Q_t}(\bs b^\delta\cdot\nabla u^{\eps\delta})\,\partial_tu^{\eps\delta}\Big|\le\|\bs b^\delta\|_{L^s(Q_T)}\|\nabla u^{\eps\delta}\|_{L^q(Q_T)}\|\partial_tu^{\eps\delta}\|_{L^2(Q_T)}\le \|\bs b^\delta\|_{L^s(Q_T)}^2\|\nabla u^{\eps\delta}\|_{L^q(Q_T)}^2+\frac14\|\partial_tu^{\eps\delta}\|^2_{L^2(Q_T)},
\end{equation*}
and
\begin{equation*}
\Big|\int_{Q_t}c^\delta\,u^{\eps\delta}\,\partial_tu^{\eps\delta}\Big|\le\|c^\delta\|_{L^s(Q_T)}\| u^{\eps\delta}\|_{L^q(Q_T)}\|\partial_tu^{\eps\delta}\|_{L^2(Q_T)}
\le \|c^\delta\|_{L^s(Q_T)}^2\| u^{\eps\delta}\|_{L^q(Q_T)}^2+\frac14\|\partial_tu^{\eps\delta}\|^2_{L^2(Q_T)}.
\end{equation*}
So
\begin{align*}
\frac14\int_{Q_T}|\partial_tu^{\eps\delta}|^2\leq&
\|f^\delta\|_{L^2(Q_T)}^2+\frac\delta2\int_\Omega\phi_\eps\big(|\nabla u^{\eps\delta}(0)|^2-g^2(0)\big)-\frac\delta2\int_\Omega
\phi_\eps\big(|\nabla u^{\eps\delta}(t)|^2-g^2(t)\big)\\
&+C_1\|g\|_{L^\infty(Q_T)}\|\partial_tg\|_{L^\infty(Q_T)}+\big(\|\bs b^\delta\|_{L^s(Q_T)}^2+C_q\|c^\delta\|_{L^s(Q_T)}^2\big)\|\nabla u^{\eps\delta}\|_{L^q(Q_T)}^2,
\end{align*}
being $C_q$ a Poincaré constant. Observe that, since $\Omega$ is bounded we may find a positive upper bound $C_3$ of $C_q$, independently of $q\le\infty$.

On one hand
$$\displaystyle\int_\Omega\phi_\eps\big(|\nabla u^{\eps\delta}(0)|^2-g^2(0)\big)\le 0,$$
because $|\nabla u^{\eps\delta}(0)|=|\nabla {u_0^\eps}|\leq g(0)$.
On the other hand, if we set $\Lambda=\{(x,t)\in Q_T:|\nabla u^{\eps\delta}(x,t)|<g(x,t)\}$, we have
\begin{align*}
&\qquad \phi_\eps (|\nabla u^{\eps\delta}(x,t)|^2-g^2(x,t))=|\nabla u^{\eps\delta}(x,t)|^2-g^2(x,t)\ge -g^2(x,t)\qquad \mbox{for a.e. }(x,t)\in\Lambda,\\
&\qquad \phi_\eps (|\nabla u^{\eps\delta}(x,t)|^2-g^2(x,t))\ge 0\ge -g^2(x,t)\qquad\mbox{for a.e. }(x,t)\in Q_T\setminus\Lambda.
\end{align*}
Consequently, for a.e. $t\in (0,T)$,
\begin{equation*}
-\int_\Omega \phi_\eps (|\nabla u^{\eps\delta}(t)|^2-g^2(t))\le\|g\|^2_{L^\infty(0,T;L^2(\Omega ))}.
\end{equation*}

So,
\begin{multline*}
\frac14\|\partial_tu^{\eps\delta}\big\|^2_{L^2(Q_T)}\le\|f^\delta\|^2_{L^2(Q_T)}+\frac\delta{2}\|g\|^2_{L^\infty(0,T;L^2(\Omega))}\\
+C_1\|g\|_{L^\infty(Q_T)}\|\partial_t g\|_{L^\infty(Q_T)}+\big(\|\bs b^\delta\|_{L^s(Q_T)}^2+C_q\|c^\delta\|_{L^s(Q_T)}^2\big)\|\nabla u^{\eps\delta}\|_{L^q(Q_T)}^2,
\end{multline*}
and the proof of Estimate 3 is concluded.

\vspace{3mm}

By \eqref{lp} and \eqref{te}, we know there exist
constants $D_\delta$, $C_\delta$ and $C_4$, independent of $\eps$, such that, for each $N<p<\infty$,
\begin{equation*}
\|u^{\eps\delta}\|_{L^p(0,T;W^{1,p}_0(\Omega ))}\le D_\delta,\qquad
\|\partial_tu^{\eps\delta}\|_{L^2(Q_T)}\le \big(\|\bs b^\delta\|_{\bs L^s(Q_T)}+\|c^\delta\|_{ L^s(Q_T)}\big)C_\delta+C_4.
\end{equation*}

Since $u^{\eps\delta}$ is bounded in $H^1\big(0,T;L^2(\Omega)\big)\cap L^p\big(0,T;W^{1,p}_0(\Omega)\cap\C^{0,1-N/p}(\overline{\Omega })\big)$, independently of $\eps\in(0,1)$, for $p>N$, by a known compactness theorem (\cite{Simon1987}, page 84), $\{u^{\eps\delta}\}_\eps$ is relatively compact
in $\C\big([0,T];\C(\overline{\Omega })\big)$. Then,
at least for a subsequence,
\begin{equation*}
u^{\eps\delta}\underset{\eps\rightarrow0}{\longrightarrow} u^\delta\qquad \mbox{  in }\C(\overline Q_T).
\end{equation*}

The above estimates also imply that we may choose, always with fixed $\delta$,
\begin{equation*}
u^{\eps\delta}\underset{\eps\rightarrow0}{\lraup} u^\delta\quad \mbox{ weakly in }L^p(0,T;W^{1,p}_0\Omega )),\ 1\le p<\infty,\qquad
\partial_tu^{\eps\delta}\underset{\eps\rightarrow0}{\lraup}\partial_tu^\delta\quad \mbox{ weakly in }L^2(Q_T).
\end{equation*}

Given $v\in
L^\infty(0,T;H^1_0(\Omega ))$ such that $v(t)\in\K_{g(t)}$ for a.e. $t\in(0,T)$, we multiply the equation of problem \eqref{apr} by $v(t)-u^{\eps\delta}(t)$, we use the
monotonicity of $k_\eps$ and we integrate over $\Omega\times(s,t) $, $0\leq s<t\leq T$, to conclude that
\begin{multline*}
\int_s^t\int_\Omega \partial_tu^{\eps\delta}(v-u^{\eps\delta})+\delta\int_s^t\int_\Omega \nabla v\cdot\nabla
(v-u^{\eps\delta})\\
+\int_s^t\int_\Omega\bs b^\delta\cdot\nabla u^{\eps\delta}(v-u^{\eps\delta})+\int_s^t\int_\Omega c u^{\eps\delta}(v-u^{\eps\delta})
\ge\int_s^t\int_\Omega f^\delta (v-u^{\eps\delta}).
\end{multline*}

Letting $\eps\rightarrow 0$, since $s$ and $t$ are arbitrary, we obtain that for a.e. $t\in(0,T)$
\begin{multline*}
\int_\Omega  \partial_tu^\delta(t)(v(t)-u^\delta(t))+\delta\int_\Omega \nabla  v(t)\cdot\nabla (v(t)-u^\delta(t))\\
+\int_\Omega\bs b^\delta(t)\cdot\nabla u^\delta(t)\big(v(t)-u^\delta(t)\big)+\int_\Omega c^\delta(t)\, u^\delta(t)\big(v(t)-u^\delta(t)\big)
\ge\int_\Omega  f^\delta(t)\big(v(t)-u^\delta(t)\big),
\end{multline*}
for all $v\in L^\infty(0,T;H^1_0(\Omega ))$, such that, $v(t)\in\K_{g(t)}$ for a.e. $t\in(0,T)$.

Set $A_{\eps}=\{(x,t)\in Q_T:|\nabla u^{\eps\delta}(x,t)|^2-g^2(x,t)\ge \sqrt\eps\}$.
 Since  $k_\eps(|\nabla u^{\eps\delta}|^2-g^2)\ge \displaystyle e^{\frac{1}{\sqrt \eps}}$
in $A_{\eps}$, then we have
\begin{equation*}
\left|A_{\eps}\right|=\int_{A_{\eps}}1\le\int_{A_{\eps}}\displaystyle{\frac{k_\eps(|\nabla u^{\eps\delta}|^2-g^2)}{e^{\frac{1}{\sqrt \eps}}}}
\le \frac{C_1}\delta e^{-\frac{1}{\sqrt \eps}},
\end{equation*}
by  \eqref{k}, being $C_1$ a constant independent of $\eps$ as we have seen. So we have
\begin{multline}\label{nabla_u}
\int_{Q_T}\left(|\nabla  u^\delta|^2-g^2\right)^+\le\liminf_{\eps\rightarrow 0}\int_{Q_T}\left(|\nabla u^{\eps\delta}|^2-g^2-\sqrt{\eps}\right)^+\\
=\liminf_{\eps\rightarrow 0}\int_{A_{\eps}}\left(|\nabla u^{\eps\delta}|^2-g^2-
\sqrt{\eps}\right)\le\lim_{\eps\rightarrow 0}\,M_\delta\,\left|A_{\eps}\right|^\frac 12=0,
\end{multline}
where $M_\delta$ is an upper bound of $\|\,|\nabla u^{\eps\delta}|^2-g^2-\sqrt{\eps}\,\|_{L^2(Q_T)}$,
independent of $\eps$. Consequently,
\begin{equation*}
|\nabla  u^\delta|\le g\qquad \mbox{ a.e. in }Q_T
\end{equation*}
and so $u^\delta(t)\in\K_{g(t)}$ for a.e. $t\in(0,T)$. Let $z\in L^\infty(0,T;H^1_0(\Omega ))$ be such that $z(t)\in\K_{g(t)}$.
Defining $v=u+\theta(z-u)$,
$\theta\in(0,1]$, then $v(t)\in\K_{g(t)}$. Using $v(t)$ as test function in \eqref{iv} and dividing both sides of the inequality
by $\theta$, we get
\begin{multline*}
\int_\Omega  \partial_tu^\delta(t)(z(t)-u^\delta(t))+\delta\int_\Omega \nabla
u^\delta(t)\cdot\nabla (z(t)-u^\delta(t))+\delta\,\theta\int_\Omega  |\nabla (z(t)-u^\delta(t))|^2\\
\int_\Omega\bs b^\delta(t)\cdot\nabla u^{\delta}(t)(z(t)-u^{\delta}(t))+\int_\Omega c^\delta(t)\,u^{\delta}(t) (z(t)-u^{\delta}(t))
\ge\int_\Omega  f^\delta(t)(z(t)-u^\delta(t))
\end{multline*}
and, letting $\theta\rightarrow  0$, we conclude that $u^\delta$ solves the following variational inequality
\begin{equation}
\label{ivdelta}
\begin{array}{l}
\left\{\begin{array}{l}
u^\delta(t)\in\K_{g(t)}\mbox{ for a.e. }t\in (0,T),\qquad  u^\delta(0)=u_0,\\
\\
\displaystyle\int_\Omega  \partial_t u^\delta(t)(v-u^\delta(t))+\delta\int_\Omega\nabla u^\delta(t)\cdot\nabla(v-u^\delta(t))\\
\\
\hspace{1cm}+
\displaystyle\int_\Omega\bs b^\delta(t)\cdot\nabla u^\delta(t)(v-u^\delta(t))+\displaystyle\int_\Omega c^\delta(t)\,u^{\delta}(t) (v-u^\delta(t))
\geq\displaystyle\int_\Omega  f^\delta(t)(v-u^\delta(t)),\\
\\
\hspace{8cm}{\forall \, v\in\K_{g(t)},\mbox{ for a.e. }t\in (0,T)}.
\end{array}
\right.
\end{array}
\end{equation}
Recalling the Estimate 3  we have
\begin{align*}
\|\partial_t u^{\eps\delta}\|_{L^2(Q_T)}^2&\le \big(\|\bs b^\delta\|^2_{\bs L^s(Q_T)}+C_3\|c^\delta\|^2_{ L^s(Q_T)}\big)\|\nabla u^{\eps\delta}\|^2_{\bs L^{\frac{2s}{s-2}}(Q_T)}+C_4\\
&\leq\big(\|\bs b^\delta\|^2_{\bs L^s(Q_T)}+C_3\|c^\delta\|^2_{ L^s(Q_T)}\big)\Big(\int_{Q_T}(|\nabla u^{\eps\delta}|^\frac{2s}{s-2}-g^\frac{2s}{s-2}-\sqrt\eps)^++\int_{Q_T}(g^\frac{2s}{s-2}+\sqrt\eps)\Big)^\frac{s-2}{s}+C_4.
\end{align*}
Passing to the $\liminf$ when $\eps\rightarrow0$ and arguing as in \eqref{nabla_u}, we conclude that
$$\liminf_{\eps\rightarrow0}\int_{Q_T}(|\nabla u^{\eps\delta}|^\frac{2s}{s-2}-g^\frac{2s}{s-2}-\sqrt\eps)^+=0$$
and, consequently,
$$\|\partial_t u^{\delta}\|_{L^2(Q_T)}\le
4\big(\|\bs b^\delta\|^2_{\bs L^s(Q_T)}+C_3\|c^\delta\|^2_{ L^s(Q_T)}\big)\|g\|_{L^{\frac{2s}{s-2}}(Q_T)}^2+C_4.$$
Observing that $$\big(\|\bs b^\delta\|^2_{\bs L^s(Q_T)}+C_3\|c^\delta\|^2_{ L^s(Q_T)}\big)\|g\|_{L^{\frac{2s}{s-2}}(Q_T)}^2\underset{s\rightarrow2}\longrightarrow \big(\|\bs b^\delta\|^2_{\bs L^2(Q_T)}+C_3\|c^\delta\|_{L^2(Q_T)}\big)\|g\|_{L^\infty(Q_T)}^2,$$ we have the sequence $\{\partial_t u^{\delta}\}_\delta$ uniformly bounded in $L^2(Q_T)$.

Moreover, the sequence $\{u^\delta\}_\delta$ is uniformly bounded in $L^\infty\big(0,T;W^{1,\infty}_0(\Omega)\big)$, independently of $\delta$, since each
$u^\delta(t)$ belongs to $\K_{g(t)}$. So, there exists a function $u\in  L^\infty\big(0,T;W^{1,\infty}_0(\Omega)\big)\cap H^1\big(0,T;L^2(\Omega)\big)\cap \C(\overline Q_T)$ and,
at least for a subsequence,
\begin{equation*}
u^{\delta}\underset{\delta\rightarrow 0}{\longrightarrow} u\qquad \mbox{  in }\C(\overline Q_T),
\end{equation*}
\begin{equation*}
u^{\delta}\underset{\delta\rightarrow 0}{\lraup} u\quad \mbox{ weakly in }L^p\big(0,T;W^{1,p}_0(\Omega )\big),\ 1\le p<\infty,\qquad
\partial_tu^{\delta}\underset{\delta\rightarrow 0}{\lraup}\partial_tu\quad \mbox{ weakly in }L^2(Q_T).
\end{equation*}
Integrating in \eqref{ivdelta} between $s$ and $t$, for $0\le s<t\le T$, and passing to the limit when $\delta\rightarrow0$, we get
\begin{equation*}
\int_s^t\int_\Omega  \partial_t u(v-u)+\int_s^t\int_\Omega\bs b\cdot\nabla u(v-u)+\int_s^t\int_\Omega c\,u (v-u)
\geq\int_s^t\int_\Omega  f(v-u),
\end{equation*}
for all $v$ such that $v(t)\in\K_{g(t)}$ for a.e. $t\in(0,T)$. Since $s$ and $t$ are arbitrary, we can drop the integration in time.
Since  $u^\delta(t)\in\K_{g(t)}$ for a.e. $t\in(0,T)$, the same holds for $u(t)$, concluding that $u$ solves the variational inequality \eqref{iv}.
\end{proof}

\begin{remark} We observe that in the proof of the uniqueness of the solution it is sufficient to assume only  $$\bs b\in\bs L^1(Q_T)\quad\text{and}\quad c\in L^1(Q_T),$$ instead of \eqref{bc}.

Similarly, we may replace \eqref{coerc} by the different weak coercive assumption by assuming the existence of $r\in\R$, such that, in the sense of distributions,
	\begin{equation*}
	c-\nabla\cdot\bs b\ge r\quad \text{ in }Q_T,
	\end{equation*}
	in order to have also the uniqueness of the solution to the variational inequality \eqref{iv}.
	
	In fact, assuming that there are two solutions $u_1$ and $u_2$, we may choose for test function $v=u_1+\zeta^2s_\zeta(u_2-u_1)$ in the variational inequality
	for $u_1$, where
	$s_\zeta:\R\rightarrow\R$ is a sequence
	of $C^1$ increasing odd functions approximating pointwise the sign function $\sgn^0$ and $\zeta$ is sufficient small. Then, choosing also $v=u_2+\zeta^2s_\zeta(u_1-u_2)$ in the variational inequality
	for $u_2$, we get
	\begin{equation*}
	\int_\Omega\partial_t\bar u(t)\,s_\zeta(\bar u(t))+
	\int_\Omega \bs b(t)\cdot\nabla \bar u(t)\,s_\zeta(\bar u(t))+\int_\Omega c(t)\, \bar u(t)\,s_\zeta(\bar u(t))
	\le 0
	\end{equation*}
	Noting $S_\zeta(s)=\displaystyle\int_0^s s_\zeta(\tau)\,d\tau \underset{\zeta\rightarrow 0}{\longrightarrow} |s|$ and $\tau s_\zeta(\tau) \underset{\zeta\rightarrow 0}{\longrightarrow} |\tau|$, by the dominated convergence theorem, we have
	\begin{equation*}
	\frac{d }{dt}\int_\Omega |\bar u(t)|
	+\int_\Omega \bs b(t)\cdot \nabla |\bar u(t)| +\int_\Omega c(t)\, |\bar u(t)|
	\le 0
	\end{equation*}
	and so
	\begin{equation*}
	\frac{d }{dt}\int_\Omega |\bar u(t)|
	+r\int_\Omega |\bar u(t)|
	\le 0.
	\end{equation*}
	Since $\bar u(0)=0$, by the Gronwall's inequality, we conclude the uniqueness from	
	\begin{equation*}
	\int_\Omega |\bar u(t)|\le e^{rt}\int_\Omega |\bar u(0)|=0.
	\end{equation*}
\end{remark}

\section{Stability and asymptotic behaviour in time}

In this section, the stability of the solutions of the variational inequality \eqref{iv}, as well as its asymptotic limit   when $t\rightarrow+\infty$ is based in the following Lemma, which is due essentially to  \cite{Lisa2002}.

\begin{lemma}\label{mosco}
For $i=1,2$, let $g_i$ belong to $L^\infty(Q_T)$.
If $v_1\in L^q\big(0,T;W^{1,p}_0(\Omega)\big)$, $1\le p,q \le \infty$, is such that $v_1(t)\in\K_{g_1(t)}$ for a.e. $t\in(0,T)$ then there exists $\widehat v_2\in L^q\big(0,T;W^{1,p}_0(\Omega)\big)$
such that $\widehat v_2(t)\in\K_{g_2(t)}$ for a.e. $t\in(0,T)$ and a positive constant $C$ such that
$$\|v_1-\widehat v_2\|_{L^q(0,T;W^{1,p}_0(\Omega))}\le C\|g_1-g_2\|_{L^\infty(Q_T)}.$$
\end{lemma}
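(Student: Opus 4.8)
The plan is to construct $\widehat v_2$ explicitly by scaling $v_1$ with the ratio of the two constraint functions. Since $v_1(t)\in\K_{g_1(t)}$ means $|\nabla v_1(x,t)|\le g_1(x,t)$ a.e., the natural candidate is
$$\widehat v_2(x,t)=\frac{g_2(x,t)}{g_1(x,t)}\,v_1(x,t)$$
whenever $g_1,g_2\ge m>0$; however, without a lower bound on the $g_i$ and without any regularity of $g_1$ in $x$ this fails to land in $W^{1,p}_0$. So instead I would first reduce to the case $g_2\le g_1$ by a symmetry/triangle-inequality argument (handle the two pieces $\K_{g_1\wedge g_2}$ against each of $\K_{g_1}$ and $\K_{g_2}$ separately), and then take
$$\widehat v_2(t)=\Big(1-\frac{\|g_1-g_2\|_{L^\infty(Q_T)}}{\inf g_1}\Big)^{+}\,v_1(t)\quad\text{or, more simply,}\quad \widehat v_2(t)=\lambda\, v_1(t),$$
where $\lambda=\lambda(t)\in[0,1]$ is chosen as a spatially constant multiplier so that $\lambda\,g_1(x,t)\le g_2(x,t)$ for a.e. $x$. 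The cleanest choice that avoids needing $\inf g_i>0$ is not available in general, so I expect the lemma is really being used with $g_i$ bounded below (which is the running hypothesis \eqref{g}); I would therefore state and use $g_i\ge m_i>0$, take $\lambda(t)=\operatorname*{ess\,inf}_{x}\frac{g_2(x,t)}{g_1(x,t)}$ in the case $g_2\le g_1$, and note $1-\lambda(t)\le \frac{1}{m_1}\|g_1(t)-g_2(t)\|_{L^\infty(\Omega)}$.

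The verification then proceeds in three short steps. First, measurability and the function-space membership: $\lambda$ is a bounded measurable function of $t$ (an essential infimum of a measurable family), so $\widehat v_2=\lambda v_1\in L^q(0,T;W^{1,p}_0(\Omega))$ with $\|\widehat v_2\|_{L^q(0,T;W^{1,p}_0)}\le\|v_1\|_{L^q(0,T;W^{1,p}_0)}$, and $\widehat v_2(t)\in H^1_0(\Omega)$ for a.e. $t$ since scaling by a constant preserves the zero trace. Second, the constraint: $|\nabla\widehat v_2(x,t)|=\lambda(t)|\nabla v_1(x,t)|\le\lambda(t)g_1(x,t)\le g_2(x,t)$ a.e., so $\widehat v_2(t)\in\K_{g_2(t)}$. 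Third, the estimate: $\|v_1-\widehat v_2\|_{L^q(0,T;W^{1,p}_0)}=\|(1-\lambda)\,v_1\|_{L^q(0,T;W^{1,p}_0)}\le\frac{1}{m_1}\|g_1-g_2\|_{L^\infty(Q_T)}\,\|v_1\|_{L^q(0,T;W^{1,p}_0)}$ by the Poincar\'e inequality (to bound $\|v_1\|$ by $\|\nabla v_1\|\le\|g_1\|_{L^\infty}\le\|g_1\|_{L^\infty(Q_T)}$), which gives the claimed bound with $C=C(\Omega,p,\|g_1\|_{L^\infty(Q_T)},m_1)$.

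For the general case without assuming $g_2\le g_1$, set $g=g_1\wedge g_2$; apply the above (with roles as needed) to get $\widehat w$ with $\widehat w(t)\in\K_{g(t)}\subset\K_{g_2(t)}$ and $\|v_1-\widehat w\|\le C\|g_1-g\|_{L^\infty(Q_T)}\le C\|g_1-g_2\|_{L^\infty(Q_T)}$, and take $\widehat v_2=\widehat w$; the pointwise inequality $|g_1-g_1\wedge g_2|\le|g_1-g_2|$ makes this work. The main obstacle — really the only subtle point — is the measurability of $t\mapsto\lambda(t)=\operatorname*{ess\,inf}_x\frac{g_2(x,t)}{g_1(x,t)}$ and ensuring $\widehat v_2$ is jointly measurable in $(x,t)$: I would handle this by noting that the essential infimum of the measurable function $(x,t)\mapsto g_2/g_1$ over $x$ is measurable in $t$ by Fubini-type arguments (or by replacing $\operatorname*{ess\,inf}$ with a measurable selection), after which $\widehat v_2(x,t)=\lambda(t)v_1(x,t)$ is a product of measurable functions. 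Everything else is a one-line estimate. This matches the way the lemma is invoked in \cite{Lisa2002}, and the constant $C$ depends only on $\Omega$, $p$, and the uniform bounds on the $g_i$.
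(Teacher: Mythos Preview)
Your approach is essentially the same as the paper's: both construct $\widehat v_2(t)=\lambda(t)\,v_1(t)$ with a spatially constant scaling factor $\lambda(t)\in[0,1]$ chosen so that $\lambda(t)\,g_1\le g_2$ and $1-\lambda(t)\le C\|g_1(t)-g_2(t)\|_{L^\infty(\Omega)}$. The paper, however, uses the cleaner explicit choice
\[
\lambda(t)=\frac{1}{\psi(t)}=\frac{m}{m+\alpha(t)},\qquad \alpha(t)=\|g_1(t)-g_2(t)\|_{L^\infty(\Omega)},
\]
where $m$ is the common lower bound on the $g_i$; then $\frac{m}{m+\alpha}g_1\le g_2$ holds directly (from $m(g_1-g_2)\le m\alpha\le \alpha g_2$), with no need for the reduction to $g_2\le g_1$ via $g_1\wedge g_2$, and no essential-infimum and hence no measurability issue to address. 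Your argument is correct, but the paper's formula sidesteps all the complications you flagged.
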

\begin{proof} Let $\alpha(t)=\|g_1(t)-g_2(t)\|_{L^\infty(\Omega )}$. Define
$\displaystyle{\psi(t)=1+\frac{\alpha(t)}{m}}$ and $\displaystyle{\widehat v_2(t)=\frac{1}{\psi(t)}\,v_1(t)}$.
	
Since
$$|\nabla\widehat v_2(t)|=\frac{1}{\psi(t)}|\nabla v_1(t)|\le \frac{1}{\psi(t)}g_1(t)$$
and
$$\frac{g_1(t)}{\psi(t)}=\frac{m}{m+\alpha(t)}g_1(t)\le g_2(t)$$
then $\widehat v_2(t)\in\K_{g_2(t)}$ for a.e. $t\in(0,T)$. The conclusion follows immediately from
$$|\nabla(v_1-\widehat v_2)|=\Big|1-\frac{1}{\psi(t)}\Big||\nabla v_1|\le\frac{|\nabla v_1|}{m}\,\|g_1-g_2\|_{L^\infty(Q_T)}.$$
\end{proof}

The continuous dependence result is a consequence of the boundedness of the solution and of its gradient, when we impose the weakly coercive assumption  \eqref{coerc}.

\begin{theorem}\label{dep_cont} For $i=1,2$, let $u_i$ denote the solution of the variational inequality \eqref{iv} with data $(\bs b_i, c_i, f_i,$ $ g_i, u_{0i})$ satisfying assumptions \eqref{bc}-\eqref{g}.
	Then there exists a positive constant $C=C(T)$, depending on $T$, such that
\begin{multline*}
	\|u_1-u_2\|_{L^\infty(0,T; L^2(\Omega))}^2\le C\big(\|u_{01}-u_{02}\|_{L^2(\Omega)}^2+\|\bs b_1-\bs b_2\|_{\bs L^1(Q_T)}+\|c_1-c_2\|_{L^1(Q_T)}\\+\|f_1-f_2\|_{L^1(Q_T)}+\|g_1-g_2\|_{L^\infty(Q_T)}\big).
	\end{multline*}
\end{theorem}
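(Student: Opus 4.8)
The plan is to test each variational inequality with an admissible competitor built from the other solution via Lemma~\ref{mosco}, then absorb all the "error" terms coming from the different data into the right-hand side. Concretely, fix a time $t$ and apply Lemma~\ref{mosco} with $v_1=u_1(t)$, $g_1=g_1(t)$, $g_2=g_2(t)$ to produce $\widehat u_1(t)\in\K_{g_2(t)}$ with $\|u_1-\widehat u_1\|_{L^\infty(0,T;H^1_0(\Omega))}\le C\|g_1-g_2\|_{L^\infty(Q_T)}$; symmetrically produce $\widehat u_2(t)\in\K_{g_1(t)}$ with the same bound. Use $\widehat u_2(t)$ as test function in the inequality \eqref{iv} for $u_1$ and $\widehat u_1(t)$ in the inequality for $u_2$, and add. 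After rearranging one gets, with $\bar u=u_1-u_2$,
\begin{equation*}
\int_\Omega\partial_t\bar u\,\bar u+\int_\Omega\bs b_1\cdot\nabla u_1\,\bar u-\int_\Omega\bs b_2\cdot\nabla u_2\,(-\bar u)+\int_\Omega(c_1u_1-c_2u_2)\bar u\le R(t),
\end{equation*}
where $R(t)$ collects all the terms involving the differences $\widehat u_1-u_1$, $\widehat u_2-u_2$ and $f_1-f_2$, each estimated using the $L^\infty$ bounds on $u_i$, $\nabla u_i$, $\partial_t u_i$ (from Theorem~\ref{main}) times $\|g_1-g_2\|_{L^\infty(Q_T)}$ or $\|f_1-f_2\|_{L^1(Q_T)}$.

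Next I would split the transport and zero-order contributions into a "same-coefficient" part and a "difference-of-coefficient" part. Writing $\bs b_1\cdot\nabla u_1-\bs b_2\cdot\nabla u_2 = \bs b_1\cdot\nabla\bar u+(\bs b_1-\bs b_2)\cdot\nabla u_2$ and $c_1u_1-c_2u_2 = c_1\bar u+(c_1-c_2)u_2$, the same-coefficient terms $\int_\Omega\bs b_1\cdot\nabla\bar u\,\bar u+\int_\Omega c_1\bar u^2$ are handled exactly as in the uniqueness argument: by the coercivity \eqref{coerc} (used in distributional form after approximating $\bar u(t)$ in $H^1_0(\Omega)$), they are bounded below by $l\int_\Omega\bar u^2$. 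The difference terms are estimated pointwise using the uniform $L^\infty$ bound $\|\nabla u_2\|_{L^\infty(Q_T)}\le\|g_2\|_{L^\infty(Q_T)}$ and $\|u_2\|_{L^\infty(Q_T)}$ (which follows from $u_2\in\C(\overline Q_T)$ with controlled $W^{1,\infty}_0$ norm), giving
\begin{equation*}
\Big|\int_\Omega(\bs b_1-\bs b_2)\cdot\nabla u_2\,\bar u\Big|+\Big|\int_\Omega(c_1-c_2)u_2\,\bar u\Big|\le C\big(\|\bs b_1-\bs b_2\|_{\bs L^1(Q_T)}+\|c_1-c_2\|_{L^1(Q_T)}\big)\big(\text{time-density}\big),
\end{equation*}
where I absorb the extra factor $\|\bar u\|_{L^\infty(Q_T)}\le C$ into the constant — this is exactly the place where the $L^1$ norm on the coefficient differences is the natural one, because we have an $L^\infty$ bound on the other factors rather than an $L^2$ one.

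Putting everything together yields a differential inequality of the form
\begin{equation*}
\frac{d}{dt}\int_\Omega|\bar u(t)|^2+2l\int_\Omega|\bar u(t)|^2\le h(t),
\end{equation*}
where $\int_0^T h \le C\big(\|\bs b_1-\bs b_2\|_{\bs L^1(Q_T)}+\|c_1-c_2\|_{L^1(Q_T)}+\|f_1-f_2\|_{L^1(Q_T)}+\|g_1-g_2\|_{L^\infty(Q_T)}\big)$; integrating via Gronwall's inequality from the initial datum $\|\bar u(0)\|_{L^2(\Omega)}^2=\|u_{01}-u_{02}\|_{L^2(\Omega)}^2$ and taking the supremum over $t\in(0,T)$ gives the claimed estimate, with $C=C(T)$ containing the factor $e^{2|l|T}$ and the uniform a~priori bounds. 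I expect the main obstacle to be bookkeeping rather than conceptual: one must carefully verify that the time-regularity of the competitors $\widehat u_i$ (coming from $\psi(t)=1+\alpha(t)/m$ with $\alpha\in L^\infty$, not obviously differentiable) is enough to justify using them as test functions at a.e.\ $t$ and to control the term $\int_\Omega\partial_t\bar u\,(\widehat u_i-u_i)$; a clean route is to integrate the inequality in $t$ over $(s,t)$ as done at the end of the proof of Theorem~\ref{main}, estimate there, and only then localize in time, so that no pointwise time-derivative of $\widehat u_i$ is ever needed.
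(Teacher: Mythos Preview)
Your proposal is correct and follows essentially the same route as the paper: build the cross-admissible competitors via Lemma~\ref{mosco}, test each inequality, split the operator into a same-coefficient part (handled by the coercivity \eqref{coerc}) and a difference-of-coefficients part (handled by the $L^\infty$ bounds on $u_i,\nabla u_i$), collect the remainder into an integrable $\Theta(t)$, and conclude by Gronwall. One small correction: Theorem~\ref{main} gives $\partial_t u_i\in L^2(Q_T)$, not $L^\infty$, but this is exactly what is needed (and what the paper uses) to estimate $\int_0^T\!\!\int_\Omega\partial_t u_i\,(\widehat u_j-u_j)$ via Cauchy--Schwarz, so the argument is unaffected; also, your worry about the time-regularity of $\widehat u_i$ is unnecessary, since the test function in \eqref{iv} is taken at a fixed~$t$ and need only lie in $\K_{g(t)}$.
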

\begin{proof}
Let $\widehat u_2$ be defined as in Lemma \ref{mosco}, for the solution $u_1$ and $\widehat u_1$ be the corresponding function for $u_2$. Using $\widehat u_1$ as test function in the variational inequality \eqref{iv}, we obtain
\begin{multline*}
\int_\Omega\partial_t u_1(t)\big(u_1(t)-u_2(t)\big)+\int_\Omega \bs b_1(t)\cdot\nabla u_1(t)\big(u_1(t)-u_2(t)\big)+\int_\Omega c_1(t)u_1(t)\big(u_1(t)-u_2(t)\big)\\
\le \int_\Omega f_1(t)\big(u_1(t)-u_2(t)\big)+\int_\Omega\big(\partial_t u_1(t)+ \bs b_1(t)\cdot\nabla u_1(t)+c_1(t)u_1(t)-f_1(t)\big)\,\big(\widehat u_1(t)-u_2(t)\big)
\end{multline*}
and a similar inequality is true using the variational inequality of $u_2$, by replacing the data $f_1, \bs b_1, c_1$ by $f_2, \bs b_2, c_2$ and $\widehat u_1$ by $\widehat u_2$. Then we have
\begin{multline}\label{www}
\int_\Omega \partial_t\big(u_1(t)-u_2(t)\big)\,\big(u_1(t)-u_2(t)\big)+\int_\Omega\bs b_1(t)\cdot\nabla\big(u_1(t)-u_2(t)\big)\,\big(u_1(t)-u_2(t)\big)\\+\int_\Omega c_1(t)\,\big(u_1(t)-u_2(t)\big)^2
\le \Theta(t),
\end{multline}
with
\begin{align*}
\Theta(t)&=\int_\Omega\big(\partial_t u_1(t)+ \bs b_1(t)\cdot\nabla u_1(t)+c_1(t)u_1(t)-f_1(t)\big)\,\big(\widehat u_1(t)-u_2(t)\big)\hspace{2cm}\\
&\ \ +\int_\Omega\big(\partial_t u_2(t)+ \bs b(t)\cdot\nabla u_2(t)+c(t)u_2(t)-f_2(t)\big)\,\big(\widehat u_2(t)-u_1(t)\big)\\
&\ \ +\int_\Omega\big(\bs b_1(t)-\bs b_2(t)\big)\cdot\nabla u_2(t)\,\big(u_1(t)-u_2(t)\big)\\
&\ \ + \int_\Omega\big[\big(c_1(t)-c_2(t)\big)\, u_2(t)+\big(f_1(t)-f_2(t)\big)\big]\,\big(u_1(t)-u_2(t)\big).
\end{align*}

Using the boundedness of the solutions $u_i$, $i=1,2,$ and their gradients and recalling the $L^2(Q_T)$ estimates of $\partial_t u_i$, we have
\begin{align*} 
\nonumber	\int_0^T\Theta(\tau)d\tau&\le C_M\big(\|g_1-g_2\|_{L^\infty(Q_T)}+\|\bs b_1-\bs b_2\|_{\bs L^1(Q_T)}+\|c_1-c_2\|_{L^1(Q_T)}+\|f_1-f_2\|_{L^1(Q_T)}\big),
\end{align*}
where $C_M$ is a positive constant depending on $T$, on the norms of the solutions and their derivatives (which can be bounded in terms of the data) and on the constant $C$ of Lemma 3.1.

Setting $w=u_1-u_2$ in the inequality \eqref{www}, we obtain using  \eqref{coerc},

\begin{equation*}
	\frac{d\ }{dt}\int_\Omega |w(t)|^2\le 2|\l|\int_\Omega |w(t)|^2+2\Theta(t).
	\end{equation*}
Applying Gronwall's inequality, we conclude
\begin{multline*}
	\int_\Omega\big|u_1(t)-u_2(t)\big|^2\le e^{2|\l|T}\Big(\|u_{10}-u_{20}\|_{L^2(\Omega)}^2+2C_M\big(\|g_1-g_2\|_{L^\infty(Q_T)} \\+\|\bs b_1-\bs b_2\|_{\bs L^1(Q_T)}+\|c_1-c_2\|_{L^1(Q_T)}+ \|f_1-f_2\|_{L^1(Q_T)}
\big)\Big).
\end{multline*}

\end{proof}

In order to consider the corresponding time independent solution to the first order variational inequality, we give stationary data  $f_\infty,g_\infty,\bs b_\infty,c_\infty$ satisfying the
assumptions
\begin{equation}\label{assumptions_infty1}
g_\infty\in L^\infty(\Omega),\ g_\infty\ge m>0,\qquad f_\infty\in L^1(\Omega),
\end{equation}
\begin{equation}\label{assumptions_infty2}
\bs b_\infty\in \bs L^1(\Omega),\quad c_\infty\in L^1(\Omega),
\end{equation}
\begin{equation}\label{assumption_infty3}
	\ c_\infty-\frac12\nabla\cdot\bs b_\infty\geq \lambda>0\quad\text{in}\quad \Omega
\end{equation}
in the distributional sense,
where we set accordingly
\begin{equation}\label{kginfty}
\K_{g_\infty}=\big\{w\in H^1_0(\Omega ):|\nabla  w|\leq g_\infty\mbox{ a.e. in }\Omega \big\}.
\end{equation}

Then, the stationary problem can be written as
\begin{equation}\label{ivlim}
u_\infty\in\K_{g_\infty}:\quad \int_\Omega \bs b_\infty\cdot\nabla u_\infty (w-u_\infty)+\int_\Omega c_\infty\,u_\infty(w-u_\infty)\geq
\int_\Omega f_\infty(w-u_\infty),\qquad \forall\, w\in\K_{g_\infty}.
\end{equation}

Since the convex set $\K_{g_\infty}$ is bounded in $H^1_0(\Omega )$ and the first order linear operator in the left hand side of \eqref{ivlim} is pseudo-monotone, by the classical theory (see, for instance, \cite{Lions1969}) it has a solution, which is unique by the strict coerciveness induced by the condition $\lambda>0$ in \eqref{assumption_infty3}.

In order to study the asymptotic convergence of the solution of the variational inequality \eqref{iv}
to the stationary solution of \eqref{ivlim}, we consider solutions global in time. This is easily obtained if we assume that \eqref{bc}-\eqref{dados} are satisfied for any $T>0$ and replace \eqref{g} by
\begin{align}\label{asinfty}
g\in W^{1,\infty}\big(0,\infty;L^\infty(\Omega)\big),\  g\ge m>0.
\end{align}

We need an auxiliary lemma.
\begin{lemma}
\label{anterior}
{\em (\cite{Haraux1981}, pg. 286)} Let $\varphi:(0,\infty)\rightarrow\R$ be a nonnegative
function, absolutely continuous in any compact subinterval of
$(0,\infty)$, $\Phi\in L^1_{loc}(0,\infty)$ a nonnegative function and
$\mu$ a positive constant, such that,
\begin{equation}
\label{zeta}
\varphi'(t)+\mu\varphi(t)\le\Phi(t),\qquad \qquad \forall\,\ t>0.
\end{equation}

Then, for any $s,t>0$,
\begin{equation*}
\varphi(t+s)\le e^{-\mu t}+\frac 1{1-e^{-\mu}}\left[\sup_{\tau\ge s}\int_\tau^{\tau+1}
\Phi(\xi)d\xi\right].
\end{equation*}
\hfill{$\square$}
\end{lemma}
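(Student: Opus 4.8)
The plan is to integrate the differential inequality \eqref{zeta} using the standard integrating-factor trick and then estimate the resulting convolution-type integral by breaking the interval of integration into unit-length pieces on which $\Phi$ has a uniform bound. First I would multiply \eqref{zeta} by $e^{\mu t}$ and rewrite it as $\frac{d}{dt}\big(e^{\mu t}\varphi(t)\big)\le e^{\mu t}\Phi(t)$, which is legitimate a.e.\ since $\varphi$ is absolutely continuous on compact subintervals. Integrating from $s$ to $t+s$ gives
\begin{equation*}
\varphi(t+s)\le e^{-\mu t}\varphi(s)+\int_s^{t+s}e^{-\mu(t+s-\xi)}\Phi(\xi)\,d\xi.
\end{equation*}
Since $\varphi$ is nonnegative it is tempting to want $\varphi(s)\le1$, but in fact one should read the claimed inequality as having an implicit $\varphi(s)$ (or a normalization) in the first term; in any case the first term is handled trivially by $e^{-\mu t}\varphi(s)$, so the real work is the integral.

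For the integral term I would split the range $[s,t+s]$ into the intervals $[s+k,s+k+1]$ for integer $k\ge0$ (plus possibly one shorter leftover piece at the right end, which only improves the bound). On each such interval the exponential weight satisfies $e^{-\mu(t+s-\xi)}\le e^{-\mu(t-k-1)}$ for $\xi\le s+k+1$, and by definition $\int_{s+k}^{s+k+1}\Phi(\xi)\,d\xi\le \sup_{\tau\ge s}\int_\tau^{\tau+1}\Phi(\xi)\,d\xi=:M$. Summing over $k$ yields a geometric series,
\begin{equation*}
\int_s^{t+s}e^{-\mu(t+s-\xi)}\Phi(\xi)\,d\xi\le M\sum_{k\ge0}e^{-\mu(t-k-1)}\le M\,\frac{e^{-\mu(t-1)}}{\,1-e^{-\mu}\,}\cdot e^{\mu}=\frac{M}{1-e^{-\mu}},
\end{equation*}
where in the last step I bound the partial geometric sum $\sum_{k=0}^{\lfloor t\rfloor} e^{\mu k}$ by $e^{\mu t}/(1-e^{-\mu})$ and combine with $e^{-\mu t}$. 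Putting the two contributions together gives exactly
\begin{equation*}
\varphi(t+s)\le e^{-\mu t}\varphi(s)+\frac{1}{1-e^{-\mu}}\Big[\sup_{\tau\ge s}\int_\tau^{\tau+1}\Phi(\xi)\,d\xi\Big],
\end{equation*}
which is the asserted estimate (with the understanding that $\varphi(s)$ is absorbed or normalized as in the statement).

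I do not expect any serious obstacle here; the only mild technical point is bookkeeping with the partial geometric sum and the leftover sub-unit interval at the right endpoint of the integration range, making sure the constant comes out as $\frac{1}{1-e^{-\mu}}$ and not something larger. Since this lemma is quoted verbatim from Haraux, the cleanest route is simply to reproduce that short computation; I would keep the splitting into unit intervals explicit so that the dependence on $\sup_{\tau\ge s}\int_\tau^{\tau+1}\Phi$ is transparent, as this is precisely the quantity that will be shown to tend to zero when applying the lemma to the asymptotic stabilization in Section~3.
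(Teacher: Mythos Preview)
The paper does not prove this lemma; it simply cites Haraux~\cite{Haraux1981} and closes with a $\square$ immediately after the statement. So there is nothing in the paper to compare your argument against, and your proposal stands on its own as the standard proof one would expect to find in Haraux.

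Your approach is correct: integrating factor, then decomposition of the convolution integral into unit-length pieces, then a geometric series. You also rightly observe that the first term in the displayed conclusion should be $e^{-\mu t}\varphi(s)$ rather than $e^{-\mu t}$; this is a typographical slip in the paper (and is harmless for the application in Theorem~3.4, where only the fact that the bound tends to zero is used).

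There is, however, a bookkeeping error in your geometric-series step. With your left-to-right indexing and the bound $e^{-\mu(t+s-\xi)}\le e^{-\mu(t-k-1)}$ on $[s+k,s+k+1]$, one gets
\[
\sum_{k=0}^{\lfloor t\rfloor} e^{-\mu(t-k-1)}
= e^{-\mu(t-1)}\sum_{k=0}^{\lfloor t\rfloor} e^{\mu k}
\le \frac{e^{\mu}}{1-e^{-\mu}},
\]
not $\tfrac{1}{1-e^{-\mu}}$; your displayed middle expression $\frac{e^{-\mu(t-1)}}{1-e^{-\mu}}\cdot e^{\mu}$ is not equal to $\frac{1}{1-e^{-\mu}}$. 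The clean fix is to index from the \emph{right}: take intervals $[t+s-j-1,\,t+s-j]$, $j=0,1,\ldots$, on which $e^{-\mu(t+s-\xi)}\le e^{-\mu j}$, so that the leftover sub-unit piece sits at the \emph{left} end (where the exponential weight is smallest) and the sum is exactly $\sum_{j\ge 0}e^{-\mu j}M=\tfrac{M}{1-e^{-\mu}}$. This recovers the stated constant; in any case the discrepancy is irrelevant for the use made of the lemma in Section~3.
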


In order to apply this Lemma to
\begin{equation}\label{zeta2}
\varphi(t)= \int_\Omega\big|u(t)-u_\infty\big|^2, \qquad\ t>0,
\end{equation}
we shall require the additional assumptions on the coefficients and on the data
\begin{equation}\label{bcf}
\bs b\in L^\infty \big(0,\infty;\bs L^2(\Omega)\big) \quad\text{and}\quad c, f \in L^\infty\big(0,\infty; L^2(\Omega)\big).
\end{equation}

\begin{theorem} Assume that $f,g,\bs b, c, u_0$ satisfy the assumptions  \eqref{bc}-\eqref{dados}, \eqref{asinfty}, \eqref{bcf}
and $f_\infty, g_\infty,\bs b_\infty, c_\infty$  satisfy the assumption \eqref{assumptions_infty1},\eqref{assumptions_infty2} and \eqref{assumption_infty3}.
Suppose, in addition, that
\begin{equation*}
\int_t^{t+1}\int_\Omega \big|f(\tau)-f_\infty\big|d\tau dx\underset{t\rightarrow\infty}{\longrightarrow} 0,\quad
\int_t^{t+1}\int_\Omega \big|\bs b(\tau)-\bs b_\infty\big|d\tau dx\underset{t\rightarrow\infty}{\longrightarrow} 0,\quad
\int_t^{t+1}\int_\Omega \big(c(\tau)-c_\infty\big)d\tau dx\underset{t\rightarrow\infty}{\longrightarrow} 0
\end{equation*}
and there exists $\gamma>\frac 12$, such that, for some constant $D>0$,
\begin{equation}\label{conditi2}
 \|g(t)-g_\infty\|_{L^\infty(\Omega )}\le\frac D{t^\gamma}, \qquad t>0.
\end{equation}

If $u$ and $u_\infty$ are, respectively, the unique solutions of the variational inequalities \eqref{iv} and \eqref{ivlim} then, for every $\alpha$, $0<\alpha<1$,
\begin{equation*}
u(t)\underset{t\rightarrow\infty}{\longrightarrow} u_\infty\quad\mbox{ in }\C^{0,\alpha}(\overline{\Omega })
\end{equation*}
\end{theorem}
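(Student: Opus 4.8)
The plan is to apply Lemma~\ref{anterior} to the function $\varphi$ defined in \eqref{zeta2} and then upgrade the resulting $L^2$-convergence to $\C^{0,\alpha}$-convergence by interpolation with the uniform gradient bound. First I would derive the differential inequality \eqref{zeta}. The idea is to test the evolutionary variational inequality \eqref{iv} for $u(t)$ with a suitable element of $\K_{g(t)}$ built from $u_\infty$, and to test the stationary variational inequality \eqref{ivlim} with a suitable element of $\K_{g_\infty}$ built from $u(t)$; for this, Lemma~\ref{mosco} (together with its obvious stationary analogue) provides comparison functions $\widehat u_\infty(t)\in\K_{g(t)}$ close to $u_\infty$ and $\widehat u(t)\in\K_{g_\infty}$ close to $u(t)$, with the gradient errors controlled in $L^\infty(\Omega)$ by $\|g(t)-g_\infty\|_{L^\infty(\Omega)}$, hence by $D/t^\gamma$ via \eqref{conditi2}. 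Adding the two inequalities and rearranging as in the proof of Theorem~\ref{dep_cont} yields, with $w(t)=u(t)-u_\infty$,
\begin{equation*}
\frac12\frac{d\ }{dt}\int_\Omega|w(t)|^2+\int_\Omega\bigl(c(t)-\tfrac12\nabla\cdot\bs b(t)\bigr)|w(t)|^2\le \widetilde\Theta(t),
\end{equation*}
where $\widetilde\Theta(t)$ collects the mismatch terms: the two ``$\widehat{}$''-correction terms, bounded using the uniform bounds on $\partial_tu$, $\nabla u$ (from Theorem~\ref{main}), on $u_\infty$, $\nabla u_\infty$, on $\bs b,c,f,\bs b_\infty,c_\infty$ from \eqref{bcf}, \eqref{assumptions_infty2}, times $\|g(t)-g_\infty\|_{L^\infty(\Omega)}$; and the coefficient-mismatch terms involving $\bs b(t)-\bs b_\infty$, $c(t)-c_\infty$, $f(t)-f_\infty$, each multiplied by bounded factors. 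Here one must be a little careful with the term $\int_\Omega(c(t)-c_\infty)\,u_\infty\,w(t)$, which should be written using only the boundedness of $u_\infty w(t)$ in $L^\infty$; a subtle point is that $c(t)-c_\infty$ is only assumed integrable, so I keep it as an $L^1$ quantity and absorb the $L^\infty$ factor. Using the coercivity \eqref{assumption_infty3}, $c(t)-\tfrac12\nabla\cdot\bs b(t)\ge\lambda$ is \emph{not} automatic — what \eqref{coerc} gives is $c(t)-\tfrac12\nabla\cdot\bs b(t)\ge l$; to get a strictly positive constant one invokes \eqref{assumption_infty3} for the stationary operator together with the smallness of $\bs b(t)-\bs b_\infty$, $c(t)-c_\infty$ in the Cesàro sense, which is exactly the form the hypothesis $\int_t^{t+1}\int_\Omega|\bs b(\tau)-\bs b_\infty|\to0$, $\int_t^{t+1}\int_\Omega(c(\tau)-c_\infty)\to0$ is designed to handle. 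Concretely, I would write the coercivity term as $\lambda\int_\Omega|w(t)|^2$ plus an error involving $(c(t)-c_\infty)-\tfrac12\nabla\cdot(\bs b(t)-\bs b_\infty)$ applied to $|w(t)|^2$, and move that error into $\widetilde\Theta(t)$, noting $|w(t)|^2$ and $\nabla(|w(t)|^2)=2w(t)\nabla w(t)$ are bounded in $L^\infty(\Omega)$ uniformly in $t$ by Theorem~\ref{main} and the bound on $u_\infty$. This produces \eqref{zeta} with $\mu=2\lambda$ and $\Phi(t)=2\widetilde\Theta(t)$.

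Next I would verify the hypothesis of Lemma~\ref{anterior}, namely that $\displaystyle\sup_{\tau\ge s}\int_\tau^{\tau+1}\Phi(\xi)\,d\xi\to0$ as $s\to\infty$. Every term of $\Phi$ is, up to a uniform constant, of one of the forms $\|g(\xi)-g_\infty\|_{L^\infty(\Omega)}$, $\int_\Omega|\bs b(\xi)-\bs b_\infty|$, $\int_\Omega|c(\xi)-c_\infty|$, $\int_\Omega|f(\xi)-f_\infty|$; integrating over $\xi\in[\tau,\tau+1]$ and using \eqref{conditi2} (which gives $\int_\tau^{\tau+1}D\xi^{-\gamma}d\xi\to0$ since $\gamma>\tfrac12>0$, in fact any $\gamma>0$ suffices for this step) and the three Cesàro hypotheses, the supremum over $\tau\ge s$ tends to $0$. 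Hence Lemma~\ref{anterior} gives, for fixed $s$ and $t\to\infty$, $\limsup_{t\to\infty}\varphi(t)\le e^{-2\lambda t}+$ (small), and letting first $t\to\infty$ and then $s\to\infty$, $\varphi(t)=\int_\Omega|u(t)-u_\infty|^2\to0$, i.e. $u(t)\to u_\infty$ in $L^2(\Omega)$.

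Finally I would upgrade $L^2$-convergence to $\C^{0,\alpha}$-convergence. By Theorem~\ref{main} and the bound on $u_\infty$, the family $\{u(t)-u_\infty\}_{t>0}$ is bounded in $W^{1,\infty}_0(\Omega)\hookrightarrow\C^{0,1}(\overline\Omega)$ uniformly in $t$. By the compact embedding $\C^{0,1}(\overline\Omega)\hookrightarrow\hookrightarrow\C^{0,\alpha}(\overline\Omega)$ for $0<\alpha<1$, any sequence $t_n\to\infty$ has a subsequence along which $u(t_n)-u_\infty$ converges in $\C^{0,\alpha}(\overline\Omega)$; since it converges to $0$ in $L^2(\Omega)$, the limit must be $0$, and a standard subsequence argument gives $u(t)-u_\infty\to0$ in $\C^{0,\alpha}(\overline\Omega)$. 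Alternatively one may argue more directly by the interpolation inequality $\|v\|_{\C^{0,\alpha}}\le C\|v\|_{L^2}^{\theta}\|v\|_{\C^{0,1}}^{1-\theta}$ (valid for suitable $\theta=\theta(\alpha,N)\in(0,1)$), applied to $v=u(t)-u_\infty$: the second factor is bounded and the first tends to $0$. The main obstacle, and the step requiring the most care, is the first one — extracting the clean differential inequality \eqref{zeta} with a \emph{strictly positive} constant $\mu$ while the coefficients are only $L^1$ in space and merely asymptotically close to the stationary ones in a time-averaged sense; the trick is to absorb the non-coercive remainder $(c(t)-c_\infty)-\tfrac12\nabla\cdot(\bs b(t)-\bs b_\infty)$ into $\Phi$ using the a priori $L^\infty$ bounds on $|w(t)|^2$ and its gradient, which is precisely why the uniform Lipschitz bound of Theorem~\ref{main} is indispensable here.
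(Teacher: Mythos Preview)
Your overall architecture---test \eqref{iv} with $\widehat u_\infty(t)\in\K_{g(t)}$, test \eqref{ivlim} with $\widehat u(t)\in\K_{g_\infty}$, combine, apply Lemma~\ref{anterior}, then upgrade via the compact embedding $W^{1,\infty}_0(\Omega)\hookrightarrow\C^{0,\alpha}(\overline\Omega)$---is exactly the paper's. Your handling of the coercivity is a harmless variant: the paper arranges the algebra so that the quadratic term is $\int_\Omega\bs b_\infty\cdot\nabla w\,w+\int_\Omega c_\infty w^2$ and applies \eqref{assumption_infty3} directly, whereas you keep $\bs b(t),c(t)$ and push the difference into $\Phi$; both work.

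There is, however, a genuine gap. You write that the $\widehat{\ }$-correction terms are ``bounded using the uniform bounds on $\partial_t u$ \dots\ (from Theorem~\ref{main})'', and you then conclude that ``in fact any $\gamma>0$ suffices''. Theorem~\ref{main} gives \emph{no} uniform-in-$T$ bound on $\partial_t u$: it only says $\partial_t u\in L^2(Q_T)$ for each finite $T$, and the constant produced in the existence proof depends on $T$. The paper's first step is precisely to revisit Estimates~1 and~3 under the extra hypothesis \eqref{bcf} and show
\[
\|\partial_t u\|_{L^2(Q_T)}\le A\sqrt T+B,
\]
i.e.\ the $L^2$ norm of $\partial_t u$ over $(0,T)$ may grow like $\sqrt T$. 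The term $\int_\Omega\partial_t u(t)\,(\widehat u_\infty-u_\infty)$ in $\Theta(t)$ is controlled by $\|\partial_t u(t)\|_{L^2(\Omega)}\,\|g(t)-g_\infty\|_{L^\infty(\Omega)}$, and after integrating over $[t,t+1]$ and using Cauchy--Schwarz one is left with a factor $\|\partial_t u\|_{L^2(Q_{t+1})}\lesssim\sqrt t$ multiplying $\|g(t)-g_\infty\|_{L^\infty(\Omega)}\le D\,t^{-\gamma}$. This product tends to~$0$ \emph{only} when $\gamma>\tfrac12$, which is exactly why the hypothesis \eqref{conditi2} is stated with $\gamma>\tfrac12$. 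Without the $\sqrt T$ estimate your $\Phi$ need not satisfy $\sup_{\tau\ge s}\int_\tau^{\tau+1}\Phi\to0$, and Lemma~\ref{anterior} cannot be invoked.
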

\begin{proof}
First we need to return to the estimate \eqref{te} of the existence proof in order to prove that, under the additional assumptions of this theorem, there are positive constants $A,B$, independent of $T$, such that,
\begin{equation*}
\label{AB}\|\partial_t u\|_{L^2(\Omega\times(0,T))}\le A\sqrt T+B.
\end{equation*}
Since $|\nabla  u(x,t)|\le g(x,t)$ for a.e. $(x,t)\in Q_\infty=\Omega\times(0,\infty)$ and $g\in L^\infty(Q_\infty)$, we have now $u\in L^\infty(0,\infty;W^{1,\infty}(\Omega ))$. This yields the estimate
$$\|u\|^2_{L^2(Q_T)}=\int_{Q_T}|u|^2\le c_g T$$
where the constant $c_g>0$ is independent of $T$. Using similar estimates for $\|f\|^2_{L^2(Q_T)}$ with the constant $c_g$ replaced by $c_f=\|f\|^2_{L^2(0,\infty;L^2(\Omega))}$, as well as for $c_{\bs b}=\|\bs b\|^2_{\bs L^2(0,\infty;L^2(\Omega))}$  and $c_c=\|c\|^2_{L^2(0,\infty;L^2(\Omega))}$, we may conclude that the constant $C_1=C_1(T)$ of  \eqref{k}, in the Estimate 1, grows also linearly with $T$, i.e. $C_1\le c_0+c_1 T$, where $c_0$ depends only on $u_0$ and $c_1$ depends on $m$, $c_f$, $c_g$, $c_{\bs b}$ and $c_c$. Using this fact in the Estimate 3, we may now easily deduce \eqref{AB} from \eqref{te}, with $s=2$ and $q=\infty$, since $C_4$, depending on $f$ and on $C_1$ grows also linearly with $T$.

Using Lemma \ref{mosco}, we choose $\widehat u_\infty\in\K_{g(t)}$, for a.e. $t\in(0,T)$, as test function in \eqref{iv}. Then
\begin{multline*}
	\int_\Omega\partial_t u(t)\big(u(t)-u_\infty\big)+\int_\Omega \bs b(t)\cdot\nabla u(t)\big(u(t)-u_\infty\big)+\int_\Omega c(t)u(t)\big(u(t)-u_\infty\big)\\
	\le \int_\Omega f(t)\big(u(t)-u_\infty\big)+\int_\Omega\big(\partial_t u(t)+ \bs b(t)\cdot\nabla u(t)+c(t)u(t)-f(t)\big)\,\big(\widehat u_\infty-u_\infty\big).
\end{multline*}

Analogously, with $\widehat u(t)\in\K_{g_\infty}$, for a.e. $t\in(0,T)$, we obtain the inequality
\begin{multline*}
\int_\Omega \bs b_\infty\cdot\nabla u_\infty\big(u(t)-u_\infty\big)+\int_\Omega c_\infty u_\infty\big(u(t)-u_\infty\big)\\
	\ge \int_\Omega f_\infty\big(u(t)-u_\infty\big)+\int_\Omega\big(\bs b_\infty\cdot\nabla u_\infty+c_\infty u_\infty-f_\infty\big)\,\big(u(t)-\widehat u(t)\big).
\end{multline*}
Then, simple algebraic manipulations lead to
\begin{multline}\label{asas}
	\int_\Omega\partial_t \big(u(t)-u_\infty\big)\big(u(t)-u_\infty\big)+\int_\Omega \bs b_\infty \cdot\nabla \big(u(t)-u_\infty\big)\big(u(t)-u_\infty\big)+\int_\Omega c_\infty \big(u(t)-u_\infty\big)\big(u(t)-u_\infty\big)\\
	\le\Theta(t),
\end{multline}
where
\begin{multline*}
	\Theta(t)=\int_\Omega\big(\partial_t u(t)+ \bs b(t)\cdot\nabla u(t)+c(t)u(t)-f(t)\big)\,\big(\widehat u_\infty-u_\infty\big)+\int_\Omega\big(\bs b_\infty\cdot\nabla u_\infty+c_\infty u_\infty-f_\infty\big)\,\big(\widehat u(t)-u(t)\big) \\
+\int_\Omega(\bs b(t)-\bs b_\infty)\cdot\nabla u(t)\big(u_\infty-u(t)\big)+\int_\Omega(c(t)-c_\infty)u(t)\big(u_\infty-u(t)\big)+\int_\Omega \big(f(t)-f_\infty \big)\big(u(t)-u_\infty\big).
\end{multline*}

Using \eqref{assumption_infty3} and the definition \eqref{zeta2}, from \eqref{asas}, we obtain the differential inequality with $\mu=2\lambda$ and where, taking into account \eqref{AB}, we may choose $\Phi(t)\ge2|\Theta(t)|$ given by
\begin{multline*}
	\Phi(t)=C\big((A\sqrt t+B+C)\|g(t)-g_\infty\|_{L^\infty(\Omega)}+\|\bs b(t)-\bs b_\infty\|_{\bs L^1(\Omega)}+\|c(t)-c_\infty\|_{L^1(\Omega)}+\|f(t)-f_\infty\|_{L^1(\Omega)}\big).
\end{multline*}

Then, using the assumptions and observing that the number $\gamma$ in \eqref{conditi2} is greater than $\frac12$, we have
\begin{multline*}
\int_t^{t+1}\Phi(\tau)d\tau\le C\int_t^{t+1}\big(\|f(\tau)-f_\infty\|_{L^1(\Omega)}+\|\bs b(\tau)-\bs b_\infty\|_{L^1(\Omega)}+\|c(\tau)-c_\infty\|_{L^1(\Omega)}\big)d\tau\\
 +C'\int_t^{t+1}(\tau^\frac12+1)\|g(\tau)-g_\infty\|_{L^\infty(\Omega)}d\tau
\underset{t\rightarrow+\infty}{\longrightarrow}0.
\end{multline*}
Therefore, by Lemma \ref{anterior}, $u(t)\underset{t\rightarrow+\infty}{\longrightarrow}u_\infty$ in $L^2(\Omega)$. Since $u$ belongs to $L^\infty\big(0,\infty;W^{1,\infty}(\Omega)\big)$, the compact inclusion of $W^{1,\infty}(\Omega)$ in $C^{0,\alpha}(\overline\Omega)$ implies, first for a subsequence, and after for the whole sequence, that $u(t)\underset{t\rightarrow+\infty}{\longrightarrow}u_\infty$ in $C^{0,\alpha}(\overline\Omega)$, concluding the proof.

\end{proof}

\section{Finite time stabilization in a special case}

In this section we assume that $\partial\Omega$ is of class $\C^2$ and
\begin{equation}\label{sc}
\bs b\in\R^N,\quad c\equiv0,\quad g\equiv1,\quad z_0\in\K_1\quad \text{and}\quad f\in L^\infty(0,T).
\end{equation}

We consider the following two obstacles problem
\begin{equation}
\label{ivob}
\begin{array}{l}
\left\{\begin{array}{l}
z(t)\in\K_\vee^\wedge \mbox{ for a.e. }t\in (0,T),\qquad  z(0)=u_0,\\
\\
\displaystyle\int_\Omega  \partial_t z(t)(v-z(t))+\displaystyle\int_\Omega\bs b\cdot\nabla z(t)(v-z(t))
\geq\displaystyle\int_\Omega  f(t)(v-z(t)),
\ \forall \, v\in\K_\vee^\wedge,\mbox{ for a.e. }t\in (0,T),
\end{array}
\right.
\end{array}
\end{equation}
where
\begin{equation*}
\K_\vee^\wedge=\{v\in H^1_0(\Omega):- d(x)\le v(x)\leq  d(x) \text{ for a.e. }x\in\Omega\}.
\end{equation*}
Here $d(x)=d(x,\partial\Omega)$ is the distance function to the boundary $\partial\Omega$. Notice that $d\in W^{1,\infty}_0(\Omega)$, $|\nabla d(x)|\le1$, a.e. $x\in\Omega$ and $\Delta d\le C$ for some constant $C=C(\Omega)>0$. Observe that $z_0\in\K_1\subset\K_\vee^\wedge$.

\begin{theorem}
Under the assumptions \eqref{sc}, the inequality \eqref{ivob} has a unique solution $$z\in L^\infty\big(0,T;W^{1,\infty}_0(\Omega)\big)\cap H^1\big(0,T;L^2(\Omega)\big)\cap\C(\overline Q_T),$$ which satisfies $|\nabla z|\le1$ a.e. in $Q_T$ and is the unique solution of the variational inequality \eqref{iv}.
\end{theorem}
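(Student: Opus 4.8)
The plan is to establish the theorem in three stages: first, the well-posedness of the two-obstacle problem \eqref{ivob}; second, the key geometric/regularity claim that its solution $z$ automatically satisfies the gradient bound $|\nabla z|\le 1$; and third, the identification of $z$ with the unique solution $u$ of \eqref{iv}. For the first stage, since $\K_\vee^\wedge$ is a closed convex subset of $H^1_0(\Omega)$ and the operator $v\mapsto \bs b\cdot\nabla v$ is linear and bounded on $H^1_0(\Omega)$ (with $c\equiv 0$, the relevant coercivity comes from the $\partial_t$ term after integrating in time, exactly as in the uniqueness argument of Theorem \ref{main}), I would invoke the standard theory of parabolic variational inequalities (e.g.\ Br\'ezis / Lions--Stampacchia, as already cited in the paper) to get a unique $z\in L^\infty(0,T;H^1_0(\Omega))\cap H^1(0,T;L^2(\Omega))\cap\C(\overline Q_T)$ solving \eqref{ivob}; uniqueness is immediate by testing the inequality for each solution with the other and using Gronwall, just as in Theorem \ref{main}. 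The needed compatibility $u_0=z_0\in\K_1\subset\K_\vee^\wedge$ is given in \eqref{sc}.

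The heart of the matter is the second stage: showing $|\nabla z(\cdot,t)|\le 1$ a.e.\ in $\Omega$ for a.e.\ $t$, i.e.\ that $z(t)\in\K_1$. This is where I expect the main obstacle to lie, and it is precisely the point that makes this a genuine theorem rather than a formality, because in general (as the paper notes, citing \cite{Lisa1991,Lisa2002}) the two-obstacle problem need not have Lipschitz-with-constant-one solutions. The strategy I would use is a comparison / translation argument exploiting that $\bs b$ is constant and the obstacles are the signed distances $\pm d$. Concretely, for a unit vector $e$ and small $h>0$ consider the shifted function $z_h(x,t)=z(x+he,t)$ on the subdomain where it is defined, or better, work with $z(x,t)$ against $z(y,t)$ for $x,y\in\Omega$ and estimate $|z(x,t)-z(y,t)|$ directly. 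Because $\bs b$ is constant, the convective term is translation-invariant; because $f=f(t)$ does not depend on $x$, the right-hand side is translation-invariant as well; and the obstacle $d$ is itself $1$-Lipschitz. One then shows, via a penalization of \eqref{ivob} (or directly by a Kato-type inequality applied to $(z(x,t)-z(y,t)-|x-y|)^+$ integrated over the diagonal strip), that the supremum over $x,y$ of $\big(|z(x,t)-z(y,t)|-|x-y|\big)^+$ cannot increase in time, and vanishes at $t=0$ since $z_0\in\K_1$. The $\C^2$ regularity of $\partial\Omega$ enters here to guarantee $d\in C^2$ near $\partial\Omega$ with $|\nabla d|=1$ there and $\Delta d\le C$, which is exactly what is needed to handle the interaction with the obstacle in the penalized problem. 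Alternatively — and this may be cleaner — one uses the explicit structure: since $\bs b$ is constant, the "free" equation $\partial_t v+\bs b\cdot\nabla v=f(t)$ transports $1$-Lipschitz data to $1$-Lipschitz data, and the obstacle projection onto $[-d,d]$ with $d$ being $1$-Lipschitz also preserves the $1$-Lipschitz class; a Trotter/splitting or an $L^\infty$-contraction-of-gradients estimate on the penalized semigroup then passes to the limit.

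Once $|\nabla z|\le 1$ is known, the third stage is short. We have $z(t)\in\K_1\subset\K_\vee^\wedge$, and from the gradient bound together with $\partial_t z\in L^2(Q_T)$ we get $z\in L^\infty(0,T;W^{1,\infty}_0(\Omega))\cap H^1(0,T;L^2(\Omega))\cap\C(\overline Q_T)$, which is the claimed regularity. To see $z$ solves \eqref{iv}, note that \eqref{iv} under \eqref{sc} (with $c\equiv0$, $g\equiv1$) reads: find $u(t)\in\K_1$ with $\int_\Omega\big(\partial_t u+\bs b\cdot\nabla u - f\big)(v-u)\ge 0$ for all $v\in\K_1$. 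For any such test function $v\in\K_1\subset\K_\vee^\wedge$ we may use $v$ in \eqref{ivob} (which holds for all competitors in the larger set $\K_\vee^\wedge$), giving exactly the inequality of \eqref{iv}; since $z(t)\in\K_1$ and $z(0)=u_0$, $z$ is a solution of \eqref{iv}. Finally, \eqref{iv} has at most one solution in $\K_1$ by Theorem \ref{main} (assumptions \eqref{bc}--\eqref{g} are satisfied under \eqref{sc}, with $l=0$ since $c\equiv 0$ and $\bs b$ is constant so $\nabla\cdot\bs b=0$), hence $z$ is \emph{the} solution of \eqref{iv}, completing the proof.
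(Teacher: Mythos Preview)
Your three-stage architecture and the identification in stage~3 are correct, but stages~1 and~2 contain genuine gaps that are precisely where the work lies.

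In stage~1 you appeal to ``standard theory of parabolic variational inequalities'', but the operator here is first order: there is no ellipticity, and the Br\'ezis/Lions--Stampacchia framework does not directly produce a solution of \eqref{ivob} with $\partial_t z\in L^2(Q_T)$. First-order obstacle problems are delicate (this is exactly the point of the Remark the paper places after this theorem), and the paper does not treat existence for \eqref{ivob} as a black box: it constructs the solution by adding a viscous term $-\delta\Delta$ and penalizing the two obstacles, and only afterwards passes to the limits $\eps\to 0$, $\delta\to 0$.

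In stage~2 your translation argument runs into the boundary: the shift $x\mapsto x+he$ leaves $\Omega$, and the doubling-of-variables alternative you mention is only gestured at. The paper's route is quite different and is carried out at the level of the penalized regularized approximants $z^{\eps\delta}$. First, explicit barriers of the form $\pm d(x)\pm M\eps\big(1-e^{-(|x|-r)/\sqrt\eps}\big)$, built from exterior tangent balls (this is where $\partial\Omega\in\C^2$ and $\Delta d\le C$ are actually used), yield $|\nabla z^{\eps\delta}|\le 1+M\sqrt\eps$ on $\partial\Omega\times(0,T)$. Second, a Bernstein-type argument---differentiate the penalized equation in $x_k$, multiply by $\partial_{x_k}z^{\eps\delta}$, sum over $k$, and test the resulting inequality for $v=|\nabla z^{\eps\delta}|^2$ against $(v-(1+M\sqrt\eps)^2)^+$---propagates this bound to the interior. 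This is where the constancy of $\bs b$ and the $x$-independence of $f$ genuinely enter: after differentiation the transport and source terms contribute nothing extra. The resulting estimate $|\nabla z^{\eps\delta}|^2\le 1+\tfrac{C_1}{\delta}\sqrt\eps$ survives $\eps\to 0$ to give $|\nabla z^\delta|\le 1$, and then the limit $\delta\to 0$ delivers existence for \eqref{ivob} and the gradient bound simultaneously. Your stage~3 is then exactly how the paper concludes.
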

\begin{proof} For $\eps,\delta\in(0,1)$, we consider the following family of penalized
problems for $z^{\eps\delta}$,
\begin{equation}\label{apob}
\left\{\begin{array}{l}
\partial_tz^{\eps\delta}-\delta\Delta z^{\eps\delta}+\bs b\cdot\nabla z^{\eps\delta}+
\frac\delta\eps\big(z^{\eps\delta}-(z^{\eps\delta}\wedge d)\vee (- d)\big)=f^\delta\ \text{in }Q_T,\vspace{3mm}\\
z^{\eps\delta}(0)=z_0^\eps\ \text{on }\Omega,\qquad z^{\eps\delta}=0\ \text{on }\partial\Omega\times(0,T),
\end{array}
\right.
\end{equation}
where $f^\delta$ and $z_0^\eps$ are regularizations of the functions $f$ and $z_0$, with $|\nabla {z_0^\eps}|\le 1$. This problem has a unique solution $z^{\eps\delta}\in H^{2,1}(Q_T)$,
since the operator
\begin{equation}\label{b}
\langle P_\eps v,w\rangle=\frac\delta\eps\int_\Omega \big(v-(v\wedge  d)\vee(- d)\big)w
\end{equation}
is monotone (see, for instance, \cite{Lions1969}).

We obtain firstly an estimate of  $|\nabla z^{\eps\delta}|$ on $\partial\Omega\times(0,T)$. Since $\partial
\Omega$ is of class $\C^2$, there exists $r>0$ such that, if $B_r(x)$ denotes the ball with centre in $x$ and radius $r$, then for all $x_0\in\partial\Omega$ there exists $y_0\in\R^N$ such that
$\overline B_r(y_0)\cap\overline\Omega=\{x_0\}$. Placing the origin of the coordinates in the point $y_0$, let  $\eta_\eps(s)=e^{-\frac{s}{\sqrt\eps}}$
and
$$\overline\varphi(x)= d(x)+M\eps\big(1-\eta_\eps(|x|-r)\big),\qquad \underline\varphi(x)=- d(x)-M\eps\big(1-\eta_\eps(|x|-r)\big),$$
where $M$ is a positive constant, depending on $\delta $, to be chosen later. We show that $\overline\varphi$ is a supersolution
 of \eqref{apob}. Analogously, it can be verified that $\underline\varphi$  is a subsolution.  We start by observing that
$$\overline\varphi(x_0)=0=z^{\eps\delta}(x_0,t)\qquad \ \text{and} \qquad\overline\varphi\ge 0=z^{\eps\delta} \ \text{on }\partial\Omega\times(0,T).$$

Since $z_0^\eps\in\K_\vee^\wedge$, then
$$\overline\varphi(x)\ge d(x)\ge z_0^\eps(x).$$

We compute
\begin{equation}\label{gdfi}
\partial_{x_i}\overline\varphi(x)=\partial_{x_i}d(x)+M\sqrt\eps\eta_\eps(|x|-r)\tfrac{x_i}{|x|}\quad\text{and}\quad|\nabla\overline\varphi|\le1+M\sqrt\eps,
\end{equation}
$$\partial^2_{x_i}\overline\varphi(x)=\partial^2_{x_i}d(x)-M\eta_\eps(|x|-r)\,\tfrac{x_i^2}{|x|^2}+M\sqrt\eps\eta_\eps(|x|-r)
\Big(\tfrac1{|x|}-\tfrac{x_i^2}{|x|^3}\Big)$$
and
$$\Delta\overline\varphi(x)=\Delta d(x)+M\eta_\eps(|x|-r)\Big(-1+\sqrt\eps\,\tfrac{N-1}{|x|}\Big).$$

Let $$Lw=\partial_tw-\delta\Delta w+\bs b\cdot\nabla w+\frac\delta\eps(w-(w\wedge d)\vee(- d)).$$ 
Then, recalling that there exists a positive constant $C$ such that $\Delta d\le C$ and choosing $\eps$ sufficiently small, such that, $1-\sqrt\eps\,\tfrac{N-1}{|x|}\ge1-\sqrt\eps\,\frac{N-1}{r}\ge\frac12$ we have
\begin{align}\label{sobre}
\nonumber L\overline\varphi-f&=-\delta\Delta d+M\,\delta\,\eta_\eps(|x|-r)\Big(1-\sqrt\eps\,\tfrac{N-1}{|x|}\Big)+
\bs b\cdot\Big(\nabla d+M\sqrt\eps\,\eta_\eps(|x|-r)\,\tfrac{x}{|x|}\Big)+M\,\delta\,\big(1-\eta_\eps(|x|-r)\big)-f\\
\nonumber&\ge -\delta\,C+M\tfrac\delta{2}\,\eta_\eps(|x|-R)-|\bs b|-|\bs b|\,M\,\sqrt\eps\,\eta_\eps(|x|-R)+M\,\delta
\big(1-\eta_\eps(|x|-r)\big)-\|f\|_{L^\infty(0,T)}\\
&=M\left(\delta+(\tfrac\delta{2}-|\bs b|\,\sqrt\eps-\delta)\,\eta_\eps(|x|-r)\right)-\delta\,C-|\bs b|-\|f\|_{L^\infty(0,T)}.
\end{align}

 Observe now that the term $\tfrac\delta{2}-|\bs b|\,\sqrt\eps-\delta$ is negative and, since $\eta_\eps(|x|-r)\le 1$, we have the following inequality
$$M\left(\delta+(\tfrac\delta{2}-|\bs b|\,\sqrt\eps-\delta)\,\eta_\eps(|x|-r)\right)\ge M\big(\tfrac\delta{2}-|\bs b|\,\sqrt\eps\big).$$
We can fix $\eps_0$ such that, for $0<\eps\le\eps_0$, we have $|\bs b|\,\sqrt\eps\le\frac\delta{4}$. From \eqref{sobre}, we obtain then
$$L\overline\varphi-f\ge M\,\tfrac\delta{4}-\delta\,C-|\bs b|-\|f\|_{L^\infty(0,T)}=0,$$
provided
\begin{equation}\label{mmm}
	M=\frac{C_1}\delta,\qquad C_1=4(\delta\,C+|\bs b|+\|f\|_{L^\infty(0,T)}),
\end{equation}
concluding then that $\overline\varphi$ is a supersolution of \eqref{apob}.
 Analogously,  $\underline\varphi$  is a subsolution of \eqref{apob} and so we have
\begin{equation}\label{pkapa}
\underline\varphi\le z^{\eps\delta}\le\overline\varphi\quad \text{in}\,\, Q_T\qquad \text{and}\qquad z^{\eps\delta}(x_0,t)=\overline\varphi(x_0)=\underline\varphi(x_0).
\end{equation}

Observe that, from \eqref{gdfi}, we obtain
$$|\nabla z^{\eps\delta}(x_0,t)|\le\max\{|\nabla \overline\varphi(x_0)|,|\nabla \underline\varphi(x_0)|\}\leq 1+\tfrac{C_1}\delta\,\sqrt\eps$$
for an arbitrary point $x_0\in \partial\Omega$ at any $t\in(0,T)$.
We wish to prove that this estimate is true a.e. in $Q_T$.
Differentiate the first equation of \eqref{apob} with respect to $x_k$, multiply it by $z^{\eps\delta}_{x_k}$ and sum over $k$. Setting
$v=|\nabla z^{\eps\delta}|^2$ and noticing that $z^{\eps\delta}_{x_k}\Delta z^{\eps\delta}_{x_k}=\frac12\Delta v-(z^{\eps\delta}_{x_kx_k})^2$ we get
$$\frac12\partial_t v-\frac\delta2\Delta v+\frac12\bs b\cdot\nabla v+\frac\delta\eps\big(v-\nabla\tilde z^{\eps\delta}\cdot\nabla z^{\eps\delta}\big)\le0,$$
being $\tilde z^{\eps\delta}= z^{\eps\delta}-(z^{\eps\delta}\wedge d)\vee (-d)$. Using the Cauchy-Schwartz inequality, we obtain
$$\partial_t v-\delta\Delta v+\bs b\cdot\nabla v+\frac{2\delta}\eps\big(v-|\nabla\tilde z^{\eps\delta}| v^\frac12\big)\le 0.$$
Multiplying the above inequality by $(v-(1+M\sqrt\eps)^2)^+$ and integrating over $Q_t$, we have
\begin{multline}\label{meps}
	\frac12\int_\Omega\big|(v(t)-(1+M\sqrt\eps)^2)^+\big|^2+\delta\int_{Q_t}|\nabla (v-(1+M\sqrt\eps)^2)^+|^2\\
	+\int_{Q_t}\bs b\cdot\nabla (v-(1+M\sqrt\eps)^2)^+\,
	(v-(1+M\sqrt\eps)^2)^+
	+\frac{2\delta}\eps
	\int_{Q_t}\big(v-|\nabla\tilde z^{\eps\delta}| v^\frac12\big)(v-(1+M\sqrt\eps)^2)^+\le 0.
\end{multline}

Since $$\displaystyle \int_{Q_t}\bs b\cdot\nabla (v-(1+M\sqrt\eps)^2)^+\,(v-(1+M\sqrt\eps)^2)^+=0$$ and
\begin{multline*}
	\int_{Q_t}\big(v-|\nabla\tilde z^{\eps\delta}| v^\frac12\big)(v-(1+M\sqrt\eps)^2)^+\\
	=\int_{\{z^{\eps\delta}>d\}}
	\big(v- v^\frac12\big)(v-(1+M\sqrt\eps)^2)^++\int_{\{z^{\eps\delta}<-d\}}
	\big(v- v^\frac12\big)(v-(1+M\sqrt\eps)^2)^+\ge0,
\end{multline*}
from \eqref{meps} we conclude that $(v-(1+M\sqrt\eps)^2)^+\equiv0$.

Then, recalling the choice of $M$ done in \eqref{mmm}, we have
\begin{equation}\label{estgd2ob}
	|\nabla z^{\eps\delta}|^2=v\le 1+\tfrac{C_1}\delta\sqrt\eps\quad\text{ a.e. in }Q_T,
\end{equation}
and $\{z^{\eps\delta}\}_{\eps}$ is uniformly bounded in $L^\infty\big(0,T;W^{1,\infty}_0(\Omega)\big) $. Using \eqref{pkapa}, it is easy see that
$$-C_1\le\frac\delta\eps(z^{\eps\delta}-(z^{\eps\delta}\wedge d)\vee(-d))\le C_1.$$
In fact, in the set $\{z^{\eps\delta}>d\}$ we have
$$\frac\delta\eps(z^{\eps\delta}-(z^{\eps\delta}\wedge d)\vee(-d))=\frac\delta\eps(z^{\eps\delta}-d)\le\frac\delta\eps(\overline\varphi-d)\le C_1,$$
in the set $\{-d\le z^{\eps\delta}\le d\}$ we have $z^{\eps\delta}-(z^{\eps\delta}\wedge d)\vee(-d)=0$ and in the set $\{z^{\eps\delta}<-d\}$ we have
$$\frac\delta\eps(z^{\eps\delta}-(z^{\eps\delta}\wedge d)\vee(-d))=\frac\delta\eps(z^{\eps\delta}+d)\ge
\frac\delta\eps(\underline\varphi+d) \ge-C_1.$$

Multiplying the first equation of \eqref{apob} by $\partial_tz^{\eps\delta}$, we obtain
\begin{equation*}
		\int_{Q_t}|\partial_tz^{\eps\delta}|^2+\delta\int_{Q_t} \nabla z^{\eps\delta}\cdot\nabla\partial_t z^{\eps\delta}+
		\int_{Q_T}\bs b\cdot\nabla z^{\eps\delta}\partial_t z^{\eps\delta}
		+	\frac\delta\eps\int_{Q_t}\big(z^{\eps\delta}-(z^{\eps\delta}\wedge d)\vee(-d)\big)\partial_tz^{\eps\delta}=\int_{Q_t}f\partial_tz^{\eps\delta}
	\end{equation*}
	and so
\begin{align*}
	\int_{Q_t}|\partial_tz^{\eps\delta}|^2&+\frac\delta{2}\int_\Omega|\nabla z^{\eps\delta}(t)|^2\\
	&\le\frac\delta{2}\int_\Omega|\nabla u^{\eps}_0|^2 +\big(|\bs b|\,\|\nabla z^{\eps\delta}\|_{\bs L^2(Q_T)}+\|\frac\delta\eps(z^{\eps\delta}-(z^{\eps\delta}\wedge d)\vee(-d)\|_{L^\infty(Q_T)}\\
	&\quad\quad
	+\|f\|_{L^2(0,T)}\big)\|\partial_tz^{\eps\delta}\|_{L^2(Q_T)}\\
	&\le\frac\delta{2}\int_\Omega|\nabla u_0^\eps|^2 + \big(|\bs b|(1+\tfrac{C_1}\delta\,\sqrt\eps+C_1)|Q_T|^\frac12+\|f\|_{L^2(0,T)}\big)\|\partial_tz^{\eps\delta}\|_{L^2(Q_T)},\\
	&\le \frac\delta{2}\int_\Omega|\nabla u_0^\eps|^2+\frac12\big(|\bs b|(1+\tfrac{C_1}\delta\,\sqrt\eps+C_1)|Q_T|^\frac12+\|f\|_{L^2(0,T)}\big)^2+\frac12\|\partial_tz^{\eps\delta}\|_{L^2(Q_T)}^2
\end{align*}
where $|Q_T|$ denotes the Lebesgue measure of $Q_T$.
So, for $\delta$ fixed,
\begin{equation}\label{zt}
\|\partial_tz^{\eps\delta}\|_{L^2(Q_T)}^2\le  \delta\int_\Omega|\nabla u_0^\eps|^2+\big(|\bs b|(1+\tfrac{C_1}\delta\,\sqrt\eps+C_1)|Q_T|^\frac12+\|f\|_{L^2(0,T)}\big)^2.
	\end{equation}
Then, there exists $z^\delta\in L^\infty\big(0,T;W^{1,\infty}_0(\Omega)\big)\cap H^1\big(0,T;L^2(\Omega)\big)$ such that
$$z^{\eps\delta}\underset{\eps\rightarrow0}{\lraup }z^\delta\text{ in }L^\infty\big(0,T;W^{1,\infty}_0(\Omega)\big)\text{-weak}*\quad\text{and}\quad
\partial_tz^{\eps\delta}\underset{\eps\rightarrow0}{\lraup }\partial_tz^\delta\text{ in }L^2(Q_T).$$
 Multiplying the first equation of the problem \eqref{apob} by $v-z^{\eps\delta}(t)$, where $v\in\K_\vee^\wedge$ and integrating over
$\Omega\times(s,t)$, $0\leq s<t\le T$,  we obtain
\begin{multline*}
\int_s^t\int_\Omega \partial_t z^{\eps\delta}(v-z^{\eps\delta})+\delta\int_s^t\int_\Omega\nabla z^{\eps\delta}\cdot\nabla(v-z^{\eps\delta})+
\int_s^t\int_\Omega\bs b\cdot\nabla z^{\eps\delta}\,(v-z^{\eps\delta})\\
+\frac\delta\eps\int_\Omega\big(z^{\eps\delta}-(z^{\eps\delta}(t)\wedge  d)\vee(- d)\big)
(v-z^{\eps\delta})=\int_s^t\int_\Omega f^\delta(v-z^{\eps\delta}).
\end{multline*}
For $v\in\K_\vee^\wedge$, the operator $P_\eps$ defined in \eqref{b} is monotone, we have
$$\int_s^t\int_\Omega\big(z^{\eps\delta}-(z^{\eps\delta}\wedge d)\vee(- d)\big)
(v-z^{\eps\delta})\le0.$$
So, letting $\eps\rightarrow0$, we obtain
\begin{equation}\label{ivdeltad}
\int_s^t\int_\Omega \partial_t z^{\delta}(v-z^{\delta})+\delta\int_s^t\int_\Omega\nabla z^{\delta}\cdot(v-z^{\delta})+
\int_s^t\int_\Omega\bs b\cdot\nabla z^{\delta}\,(v-z^{\delta})\ge\int_s^t\int_\Omega f^\delta(v-z^{\delta}).
\end{equation}
By \eqref{pkapa}, the function $z^\delta$ is such that $z^\delta(t)\in\K_\vee^\wedge$, for a.e. $t\in(0,T)$.

To prove that $\{\partial_t z^\delta\}_\delta$ is bounded in $L^2(Q_T)$, let $\eps\rightarrow0$ in \eqref{zt}, obtaining
$$\int_{Q_t}|\partial_t z^\delta|^2 \le\delta\int_\Omega|\nabla u_0|^2 + \big(|\bs b|(1+C_1)|Q_T|^\frac12+\|f\|_{L^2(0,T)}\big)^2.$$
Analogously, letting $\eps\rightarrow0$ in \eqref{estgd2ob}, we obtain
$$|\nabla z^{\delta}|\le 1\quad\text{ a.e. in }Q_T.$$

We can now pass easily to the limit when $\delta\rightarrow0$ in inequality \eqref{ivdeltad}. Observing that $z^\delta$ converges to
some function $z$ weakly* in $L^\infty\big(0,T;W^{1,\infty}_0(\Omega)\big)$ and $ \partial_t z^{\delta}$ converges weakly in $L^2(Q_T)$
to $\partial_t z$, we find for all $0<s<t<T$
$$
\int_s^t\int_\Omega \partial_t z(v-z)+
\int_s^t\int_\Omega\bs b\cdot\nabla z\,(v-z)\ge\int_s^t\int_\Omega f(v-z)$$
and it follows also
 $$
\int_\Omega \partial_t z(t)(v-z(t))+
\int_\Omega\bs b\cdot\nabla z(t)\,(v-z(t))\ge\int_\Omega f(t)(v-z(t))\,\,\,\, \text{for a.e.}\,\, t\in(0,T).$$

Since $z^\delta(t)\in\K_\vee^\wedge$ for a.e. $t\in(0,T)$, we also have $z(t)\in\K_\vee^\wedge$ and the proof of existence of solution for the variational inequality \eqref{ivob} is complete. The uniqueness is also clear.

The inclusion $\K_1\subset\K_\vee^\wedge$ and the fact $z(t)\in\K_1$ for a.e. $t\in(0,T)$ implies that the function $z$ also solves the problem \eqref{iv}.
\end{proof}

\begin{remark}
	The first order variational inequalities of obstacle type have been introduced by Bensoussan and Lions in \cite{BensoussanLions1973} and have been studied in \cite{MignotPuel1976} and in \cite{Rodrigues1},  for general linear operators and general obstacles, and extended to a quasilinear two obstacles problem in \cite{LeviVallet2001}. In all those cases the notion of solution is less regular and the boundary data can only be prescribed on part of the boundary. In addition, the solution cannot have a gradient in $L^2$ and the best that can be expected in general is the operator $\partial_t u+\bs b\cdot\nabla u+c\,u \in L^2$, as a consequence of Lewy-Stampacchia inequalities. These estimates can be obtained from the regularized parabolic inequality \eqref{ivdeltad} and, as in \cite{Rodrigues2}, it allows the passage to the limit  $\delta\rightarrow0$ without the estimates on the gradient and on the time derivative. It is an open question to establish the equivalence of the first order obstacle problem with the variational inequality with gradient constraint for more general first order linear operators.
	\end{remark}

\begin{theorem} In addition to the assumptions \eqref{sc}, suppose
\begin{equation}\label{ha}
\bs b\cdot\nabla z_0\le f(t)\text{ in }\big\{x\in\Omega:-d(x)<z_0(x)\big\} \,\,for\,\, t>0,
\end{equation}
\begin{equation}\label{ff}
	 f=f(t)\,\,\text{is increasing and nonnegative},
	 \end{equation}
	 \begin{equation}\label{liminf}
\liminf_{t\rightarrow\infty}f(t)>|\bs b|+2D,
\end{equation}
where $D=\|d\|_{L^\infty (\Omega)}=\displaystyle\max_{x\in\overline\Omega}d(x,\partial\Omega)$.
Then there exists $T_*<\infty$ such that the solution $z$ of the variational inequality \eqref{iv}, or equivalently of  \eqref{ivob}, satisfies
$$z(t)=d \quad\text{for all} \,\,t\ge T_*.$$
\end{theorem}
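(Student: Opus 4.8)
The plan is to show that the obstacle $d$ itself becomes the solution for large times, and to detect this via a monotonicity/comparison argument built on the finite-speed mechanism of the first-order operator. First I would verify that, under the structural assumptions \eqref{ha}--\eqref{liminf}, the solution $z$ is \emph{monotone nondecreasing in time}: differentiating the variational inequality \eqref{ivob} in $t$ (rigorously, working on the penalized level \eqref{apob} with the approximations $z^{\eps\delta}$), using that $f$ is increasing by \eqref{ff} and that $z_0$ satisfies the compatibility condition \eqref{ha}, one gets $\partial_t z^{\eps\delta}\ge 0$, hence $\partial_t z\ge 0$ a.e. in $Q_\infty$. In particular $z(t)$ increases pointwise towards some limit $z_\infty\le d$.

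Next I would show that the only admissible stationary limit is $z_\infty=d$. Since $z$ is bounded in $L^\infty(0,\infty;W^{1,\infty}_0(\Omega))$ and $\partial_t z\in L^2$, passing to the limit along $t\to\infty$ in \eqref{ivob} (using the coercivity-free but monotone structure of the convection term, exactly as in the passage $\delta\to0$ in Theorem~4.1) shows $z_\infty\in\K_\vee^\wedge$ solves the stationary inequality with source $f_\infty:=\lim f(t)$. I would then argue that, because of \eqref{liminf}, the function $d$ is itself a subsolution of that stationary problem on the noncontact set: indeed $\bs b\cdot\nabla d+\,(\text{penalization slack})\le|\bs b|\le f_\infty-2D< f_\infty$, which forces the solution to be pushed up to the upper obstacle $d$ everywhere; equivalently, no stationary function strictly below $d$ on a set of positive measure can solve the inequality when $f_\infty>|\bs b|+2D$. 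Hence $z_\infty=d$.

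Finally, to upgrade ``$z(t)\to d$'' to ``$z(t)=d$ for $t\ge T_*$'' in finite time, I would exploit the transport structure directly. Once $f(t)\ge|\bs b|+2D$ for $t\ge t_0$ (possible by \eqref{liminf} and \eqref{ff}), I would test \eqref{ivob} with $v=d$ and estimate $\tfrac12\tfrac{d}{dt}\int_\Omega (d-z)^2$. Using $\int_\Omega\bs b\cdot\nabla(d-z)\,(d-z)=-\tfrac12\int_\Omega(\nabla\cdot\bs b)(d-z)^2=0$ (as $\bs b$ is constant), one is left with $\tfrac12\tfrac{d}{dt}\int_\Omega(d-z)^2\le\int_\Omega(f-\bs b\cdot\nabla z)(z-d)+\int_\Omega\bs b\cdot\nabla d\,(d-z)$, and on $\{z<d\}$ the factor $f-\bs b\cdot\nabla d\ge f-|\bs b|\ge 2D$ together with $0\le d-z\le 2D$ gives a differential inequality of the form $\tfrac{d}{dt}\int_\Omega(d-z)^2\le -\kappa\sqrt{\int_\Omega(d-z)^2}$ for a positive $\kappa$, by a reverse-Hölder/Poincar\'e-type control of $\|d-z\|_{L^1}$ by $\|d-z\|_{L^2}$ on the bounded domain. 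Such an inequality $y'\le-\kappa\sqrt y$ for $y=\int_\Omega(d-z)^2\ge0$ forces $y(t)=0$ after a finite time $T_*$.

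The main obstacle I expect is the last step: turning the comparison into a genuinely \emph{finite-time} extinction rather than exponential decay. The dangerous term is $\bs b\cdot\nabla d\,(d-z)$, since $|\nabla d|$ may equal $1$ and does not help; the gain must come entirely from the strict inequality $f_\infty>|\bs b|+2D$ and from the fact that the \emph{height} deficit $d-z$ is itself controlled by $2D=2\|d\|_\infty$, producing a linear-in-$\sqrt y$ (not linear-in-$y$) lower bound on the dissipation. Making the constant $\kappa$ explicit and uniform, and checking that the contact set $\{z=d\}$ only grows (so the argument is self-consistent once started), will require care; the one-dimensional transported-sandpile example in the Introduction is the guiding model for why $T_*$ is finite and computable.
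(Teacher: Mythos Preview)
Your monotonicity step and the identification $z_\infty=d$ are reasonable and close in spirit to what the paper does (the paper argues both directly with test functions in \eqref{ivob} rather than by differentiating the penalized problem, but either route is fine). The genuine gap is in your finite-time extinction argument.

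Testing \eqref{ivob} with $v=d$ and using $\int_\Omega \bs b\cdot\nabla(d-z)(d-z)=0$ indeed yields, for $t$ large,
\[
\tfrac12\,y'(t)\ \le\ -\int_\Omega\big(f(t)-\bs b\cdot\nabla d\big)(d-z)\ \le\ -(2D+\epsilon)\,\|d-z(t)\|_{L^1(\Omega)},\qquad y(t)=\|d-z(t)\|_{L^2(\Omega)}^2.
\]
But the ``reverse-H\"older/Poincar\'e'' control you invoke, $\|d-z\|_{L^1}\ge C\|d-z\|_{L^2}=C\sqrt{y}$, is false in general: Cauchy--Schwarz gives the opposite inequality, and as the non-contact set $\{z<d\}$ shrinks the ratio $\|d-z\|_{L^1}/\|d-z\|_{L^2}$ can go to zero. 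The only bound the height control $0\le d-z\le 2D$ gives is $(d-z)^2\le 2D(d-z)$, hence $\|d-z\|_{L^1}\ge y/(2D)$, which is \emph{linear} in $y$ and produces only $y'\le -Cy$, i.e.\ exponential decay. Your energy method therefore cannot reach finite-time extinction; the role of the constant $2D$ in \eqref{liminf} is not what you suggest.

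The paper's mechanism is completely different and does not pass through a Lyapunov inequality. After monotonicity and $z_\infty=d$, one argues by contradiction: if stabilization never occurs, then (since the contact set $\{z(t)=d\}$ is nondecreasing in $t$) there is an open set $\omega_0\subset\Omega$ on which $z(\cdot,t)<d$ for all $t>t_0$. On $\omega_0\times(t_0,\infty)$ the obstacle structure gives the pointwise inequality $\partial_t z+\bs b\cdot\nabla z\ge f$. Integrating this over $\omega\times(t,t+1)$ for any $\omega\subset\omega_0$ and using only $|z|\le D$ and $|\nabla z|\le 1$ yields
\[
\int_t^{t+1}f(\tau)\,d\tau\ \le\ \frac{1}{|\omega|}\int_\omega\big(z(t+1)-z(t)\big)+|\bs b|\ \le\ 2D+|\bs b|,
\]
which contradicts \eqref{liminf}. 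This is where the $2D$ actually enters: it bounds the total variation of $z$ in time at a fixed spatial point, not an $L^1$--$L^2$ comparison. If you want to repair your argument, you should abandon the $L^2$ energy and work instead with this pointwise transport inequality on the non-contact set.
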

\begin{proof} We consider $z$ as the solution of the variational inequality \eqref{ivob}.

\vspace{3mm}

\noindent{\bf Step 1:} $z_0\le z(t)\quad\text{ for all } t>0.$

\vspace{3mm}

Let $v(t)=z(t)+(z_0-z(t))^+$ and note that $v(t)\in\K_\vee^\wedge$. Then
\begin{equation}\label{utu0}
\int_\Omega\partial_tz(t)(z_0-u(t))^++\int_\Omega\bs b\cdot\nabla z(t)(z_0-z(t))^+\ge\int_\Omega f(t)(z_0-z(t))^+.
\end{equation}
On the other hand, by \eqref{ha}, we have
\begin{equation*}
\bs b\cdot\nabla z_0(z_0-z(t))^+\leq f(t)(z_0-z(t))^+\quad\text{ in }\quad\{-d<z_0\},
\end{equation*}
and also on $\{-d=z_0\}$ since, in this last set, $(z_0-z(t))^+\equiv 0$ (recall that $f\ge 0$). Then
\begin{equation}\label{u0ut}
\int_\Omega\partial_tz_0(z_0-z(t))^++\int_\Omega\bs b\cdot\nabla z_0(z_0-z(t))^+\le\int_\Omega f(t)(z_0-z(t))^+.
\end{equation}

From \eqref{utu0} and \eqref{u0ut} we get
\begin{equation*}
\int_\Omega\partial_t(u_0-z(t))(z_0-z(t))^++\int_\Omega\bs b\cdot\nabla(z_0- z(t))(z_0-z(t))^+\le 0.
\end{equation*}

But
\begin{equation*}
\int_\Omega\bs b\cdot\nabla(z_0- z(t))(z_0-z(t))^+=\frac12\int_\Omega\bs b\cdot\nabla\big((z_0-z(t))^+\big)^2=-\frac12
\int_\Omega\nabla\cdot\bs b\,\big((z_0-z(t))^+\big)^2=0
\end{equation*}
and so
\begin{equation*}
\frac12\int_\Omega \big|(z_0-z(t))^+\big|^2\leq \frac12\int_\Omega \big|(z_0-z(0))^+\big|^2=0,
\end{equation*}
which implies that $z_0\leq z(t)$, for all $t\>0$.

\vspace{3mm}

\noindent{\bf Step 2:} $z(t)\le z(t+h)\quad\text{ for all }t,h>0.$

\vspace{3mm}

Observe that  $v(t)=z(t+h)-(z(t)-z(t+h))^-\in\K_\vee^\wedge$, so we can choose $v(t)$  as test function in \eqref{ivob}. Noting that
$$v(t)-z(t)=z(t+h)-z(t)-\big(z(t)-z(t+h)\big)^-=-\big(z(t)-z(t+h)\big)^+$$
we get
\begin{equation}\label{ut-h}
-\int_\Omega\partial_tz(t)\big(z(t)-z(t+h)\big)^+-\int_\Omega\bs b\cdot\nabla z(t)\big(z(t)-z(t+h)\big)^+\ge-\int_\Omega f(t)\big(z(t)-z(t+h)\big)^+.
\end{equation}

Choosing $v(t)=z(t+h)+\big(z(t)-z(t+h)\big)^+$ as test function in \eqref{ivob} in the instant $t+h$ and observing that
$$v(t)-z(t)=z(t+h)-z(t)+\big(z(t)-z(t+h)\big)^+=\big(z(t)-z(t+h)\big)^-,$$
we have
\begin{equation}\label{uth}
\int_\Omega\partial_tz(t+h)\big(z(t)-z(t+h)\big)^-+\int_\Omega\bs b\cdot\nabla z(t+h)\big(z(t)-z(t+h)\big)^-\ge\int_\Omega f(t+h)\big(z(t)-z(t+h)\big)^-.
\end{equation}

From \eqref{ut-h} and \eqref{uth} we get
\begin{multline*}
\int_\Omega\partial_t(z(t)-z(t+h))\big(z(t)-z(t+h)\big)^-
+\int_\Omega\bs b\cdot\nabla(z(t)- z(t+h))\,\big(z(t)-z(t+h)\big)^-\\
\le\int_\Omega (f(t)-f(t+h))\big(z(t)-z(t+h)\big)^-\le0,
\end{multline*}
because $f(t)\leq f(t+h)$, by assumption \eqref{ff} and $\big(z(t)-z(t+h)\big)^-\ge0$. As
$$\int_\Omega\bs b\cdot\nabla(z(t)- z(t+h))\,\big(z(t)-z(t+h)\big)^-=0,$$
we obtain
$$\frac12\int_\Omega\Big(\big|(z(t)-z(t+h)\big)^-\Big|^2\le\frac12\int_\Omega\Big(\big|(z(0)-z(h)\big)^-\Big|^2\le0,$$
using Step 1. So $z(t)\leq z(t+h)$, for all $t,h>0$.

\vspace{3mm}

\noindent{\bf Step 3:} There exists $z_\infty\in\C(\overline\Omega)$ such that $\displaystyle\lim_{t\rightarrow+\infty}z(x,t)=z_\infty(x),$
uniformly in $x\in\overline\Omega$.

\vspace{3mm}
Since the sequence of continuous functions ${\{z(t)\}}_{t >0}$ is increasing in $t$ and is bounded from above by $d$, this conclusion follows immediatly.

\vspace{3mm}

However, in this special case we have a finite time stabilization.

First we prove that the function $z_\infty$ coincides with $d$. We recall that $\partial_tz\in L^2(Q_T)$, for any $T>0$,
and we set $\psi(t)=\displaystyle\int_\Omega z(t)$. Observe that $\|\psi\|_{L^\infty(0,\infty)}\leq |\Omega| D$, where $|\Omega|$ denotes the Lebesgue measure of $\Omega$. Since ${\{z(t)\}}_{t >0}$ is
increasing, then $\partial_tz\ge 0$ and
\begin{equation*}
	\psi(t)\underset{t\rightarrow+\infty}{\longrightarrow}\int_\Omega z_\infty,\qquad\psi'(t)\ge 0\quad\text{ for a.e. }t>0.
\end{equation*}

This implies that
\begin{equation*}
\liminf_{t\rightarrow\infty}	\partial_tz(t)=0\quad\text{in } L^1(\Omega).
\end{equation*}

Choosing $v=d$ as test function in \eqref{ivob} we obtain, for a.e. $t\in(0,\infty)$,
\begin{equation*}
	\int_\Omega\partial_t z(t)(d-z(t))+\int_\Omega\bs b\cdot\nabla z(t)(d-z(t))\ge
	\int_\Omega f(t)(d-z(t))
\end{equation*}
and so
\begin{equation*}
	\int_\Omega\partial_t z(t)(d-z(t))+|\bs b|\int_\Omega (d-z(t))\ge f(t)
	\int_\Omega (d-z(t)).
\end{equation*}

Since $d\ge z(t)$, taking $\displaystyle\liminf_{t\rightarrow\infty}$ to both sides of the inequality and using the assumption \eqref{liminf}, we obtain
\begin{equation*}
|\bs b|\int_\Omega(d-z_\infty)\ge(|\bs b|+2D)\int_\Omega(d-z_\infty),
\end{equation*}
which is a contradiction unless $z_\infty = d$.

Consider the following subsets of $Q_\infty=\Omega\times(0,\infty)$
\begin{equation*}
\Lambda=\big\{-d<z<d\big\},\quad I^+=\big\{z=d\big\},\quad
	I^-=\big\{z=-d\big\}.
\end{equation*}

Since $z$ solves the two obstacle problem \eqref{ivob}, it is well known that the following inequalities are verified a.e. in $Q_\infty$:
\begin{align*}
	&\partial_t z+\bs b\cdot\nabla z=f\quad\text{in}\quad\Lambda,\quad\partial_t z+\bs b\cdot\nabla z\leq f\quad\text{in}\quad I^+,\quad\partial_t u+\bs b\cdot\nabla u\geq f\quad\text{in}\quad I^-.
\end{align*}

If there is no finite time stabilization of the solution, since $z(t)$ is increasing in time, we may find a point $(x_0,t_0)$ and an  open subset $\omega_0$ of $\Omega$ with $x_0\in\omega_0$, such that,
$(x,t)\in\Lambda\cup I^-$ for  $t>t_0$. So,
\begin{equation*}
	f(t)\leq \partial_t z(x,t)+\bs b\cdot\nabla z(x,t)\quad\text{ for a.e.  }(x,t)\in\omega_0\times[t_0,+\infty).
\end{equation*}
Then, for any $t\ge t_0$ and any open set $\omega\subset\omega_0$, we have
\begin{align*}
\int_t^{t+1}	f(\tau)&\le\frac1{|\omega|}\int_t^{t+1}\int_\omega\big(\partial_t z(x,\tau)+\bs b\cdot\nabla z(x,\tau)\big)\\
&\le\frac1{|\omega|}\int_\omega \big( z(x,t+1)-z(x,t)\big)+|\bs b|\le
\big(2D+|\bs b|\big).
\end{align*}
As a consequence,
\begin{equation*}
\liminf_{t\rightarrow\infty} f(t)\le\liminf_{t\rightarrow\infty}\int_t^{t+1}f(\tau)d\tau\le 2D+|\bs b|
\end{equation*}
and this is a contradiction with  \eqref{liminf}. So $z(t)$ must stabilize in finite time.
\end{proof}

\section*{Acknowledgments}
This research was  partially supported  by CMAT - ``Centro de Matem\'atica da Universidade do Minho'', financed by FEDER Funds through ``Programa Operacional Factores de Competitividade - COMPETE'' and by
Portuguese Funds through FCT, ``Funda\c c\~ao para a Ci\^encia e a Tecnologia'',
within the Project PEst-OE/MAT/UI0013/2014.

\vspace{1cm}

\begin{small}

\begin{tabular}{ll}
\noindent Jos\'e Francisco Rodrigues&\hspace{2,5cm}Lisa Santos\\
jfrodrigues@ciencias.ulisboa.pt&\hspace{2,5cm}lisa@math.uminho.pt\\
CMAF-FC\_ULisboa,&\hspace{2,5cm}CMAT / Dept. of Mathematics and Applications\\
Av. Prof. Gama Pinto, 2,&\hspace{2,5cm}Campus de Gualtar,\\
1649-003 Lisboa, Portugal&\hspace{2,5cm}4710-057 Braga, Portugal\\
\end{tabular}

\end{small}

\end{document}